\documentclass[12pt,epsfig,amsfonts]{amsart} 
\setcounter{tocdepth}{1}
\usepackage{amsmath,amsthm,amssymb,amscd,epsfig,color}
\usepackage{graphicx}
\usepackage{mathrsfs}

\setlength{\topmargin}{0.13in} \setlength{\textheight}{8.85in}
\setlength{\textwidth}{5.8in} \setlength{\oddsidemargin}{0.2in}
\setlength{\evensidemargin}{0.2in} 

\setlength{\unitlength}{1cm}


\newtheorem*{theorema}{Theorem A (Large Deviation Principle)}
\newtheorem*{theoremb}{Theorem B (LDP for the three means)}
\newtheorem*{theoremc}{Theorem C (Rate functions with unbounded flat part)}

\newtheorem{prop}{Proposition}[section]
\newtheorem{lemma}[prop]{Lemma}

\theoremstyle{definition}

\theoremstyle{remark}
\newtheorem{remark}[prop]{Remark}

\numberwithin{equation}{section}

\begin{document}

\author{Hiroki Takahasi}

\address{Keio Institute of Pure and Applied Sciences (KiPAS), Department of Mathematics,
Keio University, Yokohama,
223-8522, JAPAN} 
\email{hiroki@math.keio.ac.jp}
\urladdr{\texttt{http://www.math.keio.ac.jp/~hiroki/}}

\subjclass[2010]{Primary 11A55, 11K50, 37A40, 60F10; Secondary 37A45, 37A50}
\thanks{{\it Keywords}: BCF expansion, large deviation principle, thermodynamic formalism, multifractal analysis}


\title[LDP in the BCF expansion]
 {Large deviation principle for\\ harmonic/geometric/arithmetic mean of digits\\ in
 backward continued fraction expansion} 
 \maketitle
 
 \begin{abstract}
 We establish the (level-1) large deviation principles for three kinds of means
 associated with the backward continued fraction expansion.
  We show that: for the harmonic and geometric means, 
   the rate functions vanish exactly at one point;
  for the arithmetic mean, it is completely degenerate, vanishing at every point
  in its effective domain. Our method of proof employs the thermodynamic formalism for finite Markov shifts,
 and a multifractal analysis for the R\'enyi map generating the backward continued fraction digits.
 We completely determine the class of unbounded arithmetic functions for which
 the rate functions vanish at every point in unbounded intervals.
  \end{abstract}
  
  \section{Introduction}
  Each number $x\in(0,1)\setminus\mathbb Q$ 
  has 
  {\it the Regular Continued Fraction} (RCF) expansion
 \begin{equation}\label{forward}x=\cfrac{1}{a_1+\cfrac{1}{a_2+\cfrac{1}{\ddots}}},\end{equation}
 where each $a_j=a_j(x)$ is a positive integer.
Statistics of the digits $a_1(x),\ldots,a_n(x)$ as $n\to\infty$ for typical $x$ in the sense of the Lebesgue measure
$\lambda$ on $[0,1]$
has been considered since the time of Carl Friedrich Gauss.
Asymptotics of the following three means are of particular interest:
\medskip

(a) (harmonic mean) $n(a_1^{-1}+\cdots+a_n^{-1})^{-1}$;

(b) (geometric mean) $\sqrt[n]{a_1\cdots a_n}$;

(c) (arithmetic mean) $n^{-1}(a_1+\cdots+a_n)$.
\medskip

\noindent Among these is a well-known relation
$$\frac{n}{a_1^{-1}+\cdots+a_n^{-1}}\leq 
\sqrt[n]{a_1\cdots a_n}\leq \frac{a_1+\cdots+a_n}{n}.$$

It was Khinchin \cite[Theorem 35]{Khi64} who showed that the geometric mean converges to a constant
$K=\prod_{k=1}^\infty\left(1+\frac{1}{k(k+2)}\right)^{\frac{\log k}{\log2}}$  $\lambda$-a.e.
Although Khinchin did not use ergodic theory in his original proof, his result is a consequence
of Birkhoff's ergodic theorem applied to the Gauss map 
 $x\in(0,1]\to 1/x-\lfloor1/x\rfloor\in[0,1)$ and the Gauss measure $\frac{1}{\log2}\frac{dx}{1+x}$.
 By Birkhoff's ergodic theorem, the harmonic mean converges to a constant
$H=\left(\frac{1}{\log2}\sum_{k=1}^\infty \frac{1}{k}\log\frac{(k+1)^2}{k(k+2)}\right)^{-1}$  $\lambda$-a.e.
The arithmetic mean divergences to infinity $\lambda$-a.e. \cite{Khi64}.

There are a number of ways to expand real numbers into continued fractions
(See e.g., \cite{DajKra00,IosKra02}).
We consider
 {\it the Backward Continued Fraction} (BCF) expansion
    \begin{equation*}\label{expansion}x=1-\cfrac{1}{b_1-\cfrac{1}{b_2-\cfrac{1}{\ddots}}},\end{equation*}
    where  $x\in[0,1)$ and each $b_j=b_j(x)$ is an integer greater than or equal to $2$.
          The BCF expansion has been used, for example, in inhomogeneous Diophantine approximations
    \cite{Pin01}.
 For connections of the BCF expansion with geodesic flows,
    see \cite{AdlFla84}.
     The digits in the BCF expansion are generated by iterating the R\'enyi map
    $T\colon[0,1)\to[0,1)$ given by
     $$Tx=\frac{1}{1-x}-\left\lfloor\frac{1}{1-x}\right\rfloor,$$ namely
     $$b_n(x)=\left\lfloor\frac{1}{1-T^{n-1}(x)}\right\rfloor+1\quad\text{for each $n\geq1$.}$$
         The graph of $T$ can be obtained from that of the Gauss map 
      by reflecting the latter in the line $x=1/2$. This is the reason for the name of the
      BCF expansion.
      
       The map $T$
    is a full-branched 
     non-uniformly expanding Markov map having 
    $x=0$ as a unique neutral fixed point.
    R\'enyi \cite{Ren57} showed that $T$ leaves invariant the measure $\frac{dx}{x}$.
    This is an ergodic yet infinite measure, and so Birkhoff's ergodic theorem does not apply. 
     As a result, the statistics of the digits in the BCF expansion 
     is very much different from that in the RCF expansion.
    Using
      Hopf's ratio ergodic theorem 
      one can show the existence of intermittency at the neutral fixed point,
       in particular 
      $$\lim_{n\to\infty}\frac{1}{n}\#\{1\leq j\leq n\colon b_j=2\}=1\quad \text{$\lambda$-a.e.}$$ 
    Iosifesqu and Kraaikamp \cite[Theorem 4.4.8]{IosKra02} showed that the harmonic and geometric means of the 
BCF digits converge
to $2$ $\lambda$-a.e. On the other hand,
typical behaviors of the arithmetic mean 
are peculiar. 
Aaronson \cite{Aar86} showed that the arithmetic mean converges to $3$ in measure.
Aaronson and Nakada \cite{AarNak03} proved that
$$\liminf_{n\to\infty}\frac{b_1+\cdots +b_n}{n}=2\quad\text{and }\quad\limsup_{n\to\infty}\frac{b_1+\cdots +b_n}{n}=\infty
\quad \text{$\lambda$-a.e.}$$

In this paper we are concerned with the large deviation principle (LDP) for the BCF expansion, 
namely, with the existence of rate functions
which estimate probabilities of rare events with which the harmonic/geometric/arithmetic means stay away from their expectations.
Although the arithmetic mean has no expectation due to Aaronson and Nakada, it is still meaningful to consider the existence of a rate function.
We emphasize that the formulation of the LDP due to Donsker and Varadhan 
\cite{DonVar75} does not a-priori require the existence of expectation.

For a comparison later, let us summarize known results from \cite{DenKab07,Tak19} on the LDP for the RCF expansion.

\begin{itemize}
\item[(a)] (harmonic mean \cite{DenKab07})
There exists a strictly convex function $I^{({\rm a})}\colon\mathbb R\to[0,\infty]$ such that the following holds:

\begin{itemize}

\item for any $\alpha\in[1,H)$,
\begin{align*}\lim_{n\to\infty}\frac{1}{n}\log\lambda\left\{x\in(0,1)\setminus\mathbb Q\colon
\frac{n}{a_1^{-1}(x)+\cdots +a_n^{-1}(x)}\leq \alpha \right\}
&=- I^{({\rm a})}\left(\frac{1}{\alpha}\right)\\
&<0.\end{align*}

\item for any $\alpha\in(H,\infty)$,
\begin{align*}\lim_{n\to\infty}\frac{1}{n}\log\lambda\left\{x\in(0,1)\setminus\mathbb Q\colon
\frac{n}{a_1^{-1}(x)+\cdots +a_n^{-1}(x)}\geq \alpha \right\}
&=- I^{({\rm a})}\left(\frac{1}{\alpha}\right)\\
&<0.\end{align*}
\end{itemize}

\item[(b)] (geometric mean \cite[Theorem 2]{Tak19})
There exists a convex function $I^{({\rm b})}\colon\mathbb R\to[0,\infty]$ such that for any interval $J$
intersecting $(2,\infty)$,
\begin{align*}\lim_{n\to\infty}\frac{1}{n}\log\lambda\left\{x\in(0,1)\setminus\mathbb Q\colon
\sqrt[n]{a_1(x)\cdots a_n(x)}\in J\right\}
&=- \inf_JI^{({\rm b})}(\log\alpha).\end{align*}
It is not precisely known for which $J$ this rate is strictly negative.
 It is plausible that $I^{({\rm b})}$ vanishes only at $\log K$,
 and so this rate is strictly negative if and only if $\log K\notin{\rm cl}(J)$.

\item[(c)] (arithmetic mean \cite[Theorem 2]{Tak19}) there exists a strictly positive, strictly decreasing convex function 
$I^{({\rm c})}\colon\mathbb R\to[0,\infty]$ such that
$\displaystyle{\lim_{\alpha\to\infty}}I^{({\rm c})}(\alpha)=0$ and
\begin{align*}
\lim_{n\to\infty}\frac{1}{n}\log\lambda\left\{x\in(0,1)\setminus\mathbb Q\colon
\frac{a_1(x)+\cdots +a_n(x)}{n}\leq \alpha \right\}&=-I^{({\rm c})}(\alpha)\\
&<0
\end{align*}
for any $\alpha\geq1$. Moreover,
 for any $\alpha\geq 1$,
\begin{align*}\lim_{n\to\infty}\frac{1}{n}\log\lambda\left\{x\in(0,1)\setminus\mathbb Q\colon
\frac{a_1(x)+\cdots +a_n(x)}{n}\geq \alpha \right\}=0.
\end{align*}

\end{itemize}

The approach in \cite{Tak19} differs from that in \cite{DenKab07} and 
 uses finite approximations based on ergodic theory and thermodynamic formalism inspired by \cite{Tak84}.
This approach is more flexible than that in \cite{DenKab07} and allows cases in which there is no expectation
and the strong law of large number does not hold. Moreover, it gives a representation of the rate function
in terms of entropy and Lyapunov exponent of invariant measures. 
We pursue the approach in \cite{Tak19}
for the BCF expansion and the R\'enyi map.

Let $\mathcal M(T)$ denote the set of $T$-invariant Borel probability measures.
 Write $\phi=-\log T'$ and denote by
$\mathcal M_\phi(T)$ the set of elements of $\mathcal M(T)$ for which $\phi$ is integrable.
For each $\mu\in\mathcal M_\phi(T)$ define
$\chi(\mu)=-\int\phi d\mu\geq0,$ and let $h(\mu)$ denote the 
Kolmogorov-Sina{\u\i} entropy of $\mu$ with respect to $T$.
 Put $$F(\mu)=h(\mu)-\chi(\mu).$$

Let $\mathbb N$ denote the set of non-negative integers.
By {\it an arithmetic function} we mean 
a non-constant function $\mathbb N\setminus\{0,1\}\to\mathbb R$.
We view $\psi\circ b_1$ as an observable and consider its time averages along orbits of $T$.
For an arithmetic function $\psi$ define
 a rate function $I_{\psi\circ b_1}\colon\mathbb R\to[0,\infty]$ by
$$I_{\psi\circ b_1}(\alpha)=-\lim_{\epsilon\to0}\sup\left\{F(\mu)\colon\mu\in\mathcal M_\phi(T),
\int \psi\circ b_1d\mu\in(\alpha-\epsilon,\alpha+\epsilon)\right\},$$
where $\sup\emptyset=-\infty$. The supremum is taken over all 
$\mu\in\mathcal M_\phi(T)$ for which $\psi\circ b_1$ is integrable.
By definition, this is a lower semi-continuous function.
Define
$$c_\psi=\inf_{x\in[0,1)}\liminf_{n\to\infty}\frac{1}{n}\sum_{j=1}^n\psi(b_j(x)),$$
and
$$d_\psi=\sup_{x\in[0,1)}\limsup_{n\to\infty}\frac{1}{n}\sum_{j=1}^n\psi(b_j(x)).$$
Then we have $-\infty\leq c_\psi< d_\psi\leq\infty$.
It is not hard to show that
these two numbers determine an effective domain on which the rate function is bounded:
if $\alpha\in(c_\psi,d_\psi)$ then
$I_{\psi\circ b_1}(\alpha)<\infty$;
if $\alpha\notin[c_\psi,d_\psi]$ then
$I_{\psi\circ b_1}(\alpha)=\infty$.
From the affinity of entropy and Lyapunov exponent on measures,
the rate function is a convex function.
\begin{theorema}
Let $\psi$ be an arithmetic function.
For any interval $J$ intersecting $(c_\psi,d_\psi)$,
$$\lim_{n\to\infty}\frac{1}{n}\log\lambda\left\{x\in[0,1)\colon\frac{1}{n}
\sum_{j=1}^n\psi(b_j(x))\in J\right\}
=-\inf_J I_{\psi\circ b_1}.$$
\end{theorema}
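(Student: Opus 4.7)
The plan is to follow the strategy of \cite{Tak19} adapted to the R\'enyi map. I would approximate both the upper and the lower bound by finite Markov subsystems obtained by truncating the BCF digits to $\{2,\ldots,N\}$, and then let $N\to\infty$.

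For the upper bound I would exploit the full Markov structure of $T$. Each cylinder $[b_1,\ldots,b_n]\subset[0,1)$ is an interval, and on the subfamily of cylinders whose digits are all bounded by $N$ the map $T$ satisfies a uniform bounded distortion estimate, which yields $\lambda([b_1,\ldots,b_n])\asymp\exp(S_n\phi(y))$ for any $y$ in the cylinder, where $S_n$ denotes the Birkhoff sum. Grouping these cylinders according to the value of $\frac{1}{n}S_n(\psi\circ b_1)$, summing their Lebesgue measures, and applying the variational principle on the finite subshift $\Sigma_N$ leads to
$$\limsup_{n\to\infty}\frac{1}{n}\log\lambda\left\{x:\tfrac{1}{n}S_n(\psi\circ b_1)(x)\in J\right\}\leq\sup\left\{F(\mu):\mu\in\mathcal M_\phi(T|_{\Sigma_N}),\ \int\psi\circ b_1\,d\mu\in\mathrm{cl}(J)\right\}+\varepsilon_N,$$
plus a contribution from orbits visiting digits $>N$ that can be estimated separately using the explicit density of $\lambda$. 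Letting $N\to\infty$ and invoking the lower semicontinuity of $I_{\psi\circ b_1}$ together with the hypothesis $J\cap(c_\psi,d_\psi)\neq\emptyset$ gives $-\inf_J I_{\psi\circ b_1}$ as the correct bound.

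For the lower bound, given $\alpha\in J\cap(c_\psi,d_\psi)$ and $\mu\in\mathcal M_\phi(T)$ with $\int\psi\circ b_1\,d\mu=\alpha$ approximately realizing $-I_{\psi\circ b_1}(\alpha)$, I would approximate $\mu$ by ergodic Markov measures $\mu_N$ supported on the finite subshifts $\Sigma_N$ so that $h(\mu_N)\to h(\mu)$, $\chi(\mu_N)\to\chi(\mu)$, and $\int\psi\circ b_1\,d\mu_N\to\alpha$. For each such $\mu_N$, combining Birkhoff's theorem applied to $\phi$ and $\psi\circ b_1$ with the Shannon--McMillan--Breiman theorem produces a family of length-$n$ cylinders of cardinality close to $\exp(nh(\mu_N))$ on each of which $\frac{1}{n}S_n\phi$ is close to $-\chi(\mu_N)$ and $\frac{1}{n}S_n(\psi\circ b_1)\in J$. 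Summing the Lebesgue measures of these cylinders yields a lower bound of order $\exp(n(F(\mu_N)-2\varepsilon))$; letting $N\to\infty$ and $\varepsilon\to0$ completes the argument.

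The main obstacle, and what distinguishes this setting from the RCF treatment in \cite{Tak19}, is the interplay between the neutral fixed point $x=0$ and the unboundedness of $\psi$. For the upper bound, measures concentrating mass near $0$ have vanishing Lyapunov exponent and typically lie outside $\mathcal M_\phi(T)$; one must estimate separately the Lebesgue measure of orbits spending a large proportion of time at the digit $b_j=2$, for which the explicit density of $\lambda$ with respect to the infinite invariant measure $dx/x$ is crucial. For the lower-bound construction, the three quantities $h(\mu_N)$, $\chi(\mu_N)$, and $\int\psi\circ b_1\,d\mu_N$ must be controlled simultaneously in the limit, which is most naturally achieved by taking $\mu_N$ as the equilibrium state of a two-parameter potential $q(\psi\circ b_1)+t\phi$ on $\Sigma_N$ and selecting the parameters along a suitable diagonal sequence; the unboundedness of $\psi$ forces careful control of the parameter $q$ at each stage.
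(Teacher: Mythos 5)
Your lower bound is essentially the paper's argument (reduce to ergodic $\mu$ by an approximation result, then Birkhoff plus Shannon--McMillan--Breiman applied to full-branch $n$-cylinders), though the detour through equilibrium states of $q(\psi\circ b_1)+t\phi$ on truncated subshifts $\Sigma_N$ is unnecessary and, for unbounded $\psi$, adds the extra burden of controlling $\int\psi\circ b_1\,d\mu_N$ along the approximation. The genuine gap is in your upper bound, and it sits exactly where the new difficulty of this paper lies. First, the claim that cylinders with digits bounded by $N$ enjoy \emph{uniform} bounded distortion, so that $\lambda([b_1,\ldots,b_n])\asymp\exp S_n\phi(y)$ with constants independent of $n$, is false: the neutral fixed point $x=0$ lies inside the digit-$2$ branch, so for every $N\geq2$ the truncated system still contains it, and cylinders beginning with a long block of $2$'s have distortion growing polynomially in $n$ (the all-$2$ cylinder of length $n$ has length $\sim 1/n$ while $(T^n)'\equiv 1$ at its left endpoint). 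What is true, and what the paper proves via Thaler's asymptotics $c_nn\to1$, is only the sub-exponential bound $\sup_{x,y\in A}\bigl(\widehat\phi(x)-\widehat\phi(y)\bigr)\leq\gamma\log n$ for $\gamma>2$; this suffices at the exponential scale of the LDP, but it must be established and then carried as an error term $\gamma\log n$ through the variational-principle argument.

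Second, the alphabet truncation itself cannot be patched by ``estimating separately'' the orbits that see a digit larger than $N$: for fixed $N$ the set $\{x\colon b_j(x)>N\text{ for some }j\leq n\}$ has Lebesgue measure which is \emph{not} exponentially small in $n$ (already $\lambda\{b_1>N\}\approx 1/N$, and the preimages decay only sub-exponentially because $\lambda$ is not invariant and the invariant density $1/x$ is infinite), so this additive contribution swamps any strictly negative exponential rate you hope to prove. The paper avoids truncating the digits altogether: for each $n$ it takes the family $\mathscr{A}^n(V)$ of $n$-cylinders on which the digit average lies in $V$, extracts a finite subfamily $\mathscr{C}$ with $\lambda[\mathscr{A}^n(V)]\leq\kappa\lambda[\mathscr{C}]$, builds from $\mathscr{C}$ a horseshoe for $\widehat T=T^n$ conjugate to a full shift on $\#\mathscr{C}$ symbols, and uses the variational principle for $\widehat\phi=-\log(\widehat T)'$ together with the $\gamma\log n$ distortion bound to produce a single measure $\mu\in\mathcal M_\phi(T)$ with $\int\psi\circ b_1\,d\mu\in V$ and $\log\lambda[\mathscr{C}]\leq F(\mu)n+\gamma\log n$, which combined with the analogue of Lemma \ref{ratefunction} gives \eqref{u}. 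A smaller remark: your statement that measures concentrating near $0$ ``typically lie outside $\mathcal M_\phi(T)$'' is backwards --- $\phi=-\log T'$ is bounded near $x=0$ (it is at $x=1$ that integrability can fail), and indeed $\delta_0\in\mathcal M_\phi(T)$ with $F(\delta_0)=0$, which is precisely why the zero set of the rate function is subtle here.
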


For the RCF expansion, the LDP in the form of Theorem A was shown in \cite{DenKab07} for bounded
$\psi$, and in \cite{Tak19} in full generality. All new difficulties here arise from the existence of the neutral fixed point
of the R\'enyi map $x=0$.
 For interval maps with neutral fixed points like the Manneville-Pomeau map
 $x\in[0,1]\mapsto x+x^{1+p}\in[0,1]$ mod $\mathbb Z$ $(p>0)$,
 several large deviations results for continuous observables are available
  \cite{PolSha09,PolShaYur}.
The R\'enyi map has an infinite number of branches, and so does not fall into this class.
Moreover, the function $\psi$ in Theorem A is not assumed to be bounded.
The unboundedness of observables brings new difficulties and new phenomena,
as developed below.

Theorem A is so general, being true for any arithmetic function, that nothing further can be said about
 properties of rate functions in this full generality. 
In order to conclude for which $J$ this rate is strictly negative in each specific case,
it is necessary to identify the set of minimizers.
For the above three specific means in question, we obtain the next result.

\begin{theoremb}
The following holds:
\begin{itemize}
\item[(a)] {\rm (harmonic mean)} 
 $I_{b_1^{-1}}(\alpha)=0$ if and only if $\alpha=1/2$.
In particular, for any $\alpha\in[2,\infty)$,
\begin{align*}\lim_{n\to\infty}\frac{1}{n}\log\lambda\left\{x\in[0,1)\colon
\frac{n}{b_1^{-1}(x)+\cdots +b_n^{-1}(x)}\leq \alpha \right\}
&=- I_{b_1^{-1}}\left(\frac{1}{\alpha}\right)\\
&<0.\end{align*}
\item[(b)] {\rm (geometric mean)}
$I_{\log b_1}(\alpha)=0$ if and only if $\alpha=\log2$. In particular,
for any $\alpha\in[2,\infty)$,
\begin{align*}\lim_{n\to\infty}\frac{1}{n}\log\lambda\left\{x\in[0,1)\colon
\sqrt[n]{b_1(x)\cdots b_n(x)}\geq \alpha \right\}
&=- I_{\log b_1}(\log\alpha)\\&<0.\end{align*}
\item[(c)] {\rm (arithmetic mean)}
 $I_{b_1}(\alpha)=0$ for any $\alpha\in[2,\infty)$. 
For every $\delta>5$ and
  every bounded interval $J\subset [2,\infty)$, there exists $N=N(\delta,J)\geq1$ such that for every $n\geq N$,
 \begin{align*}\lambda\left\{x\in[0,1)\colon
\frac{b_1(x)+\cdots +b_n(x)}{n}\in J \right\}\geq\frac{n^{-\delta}}{(\sup J-2)^2}.\end{align*}
\end{itemize}
\end{theoremb}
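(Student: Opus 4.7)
The unifying observation is that the Dirac mass $\delta_0$ at the neutral fixed point lies in $\mathcal M_\phi(T)$ with $h(\delta_0)=\chi(\delta_0)=0$, hence $F(\delta_0)=0$; and since $b_1\equiv 2$ on the cylinder $[0,1/2)$, one has $\int\psi\circ b_1\,d\delta_0=\psi(2)\in\{1/2,\log 2,2\}$ for the three choices $\psi=b_1^{-1},\log b_1,b_1$. Ruelle's inequality $h(\mu)\le\chi(\mu)$ for the R\'enyi map gives $F\le 0$ on $\mathcal M_\phi(T)$, so $\delta_0$ realises the supremum in the definition of $I_{\psi\circ b_1}(\psi(2))$, whence $I_{\psi\circ b_1}(\psi(2))=0$ in all three cases. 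The remaining work is to decide when $\psi(2)$ is the \emph{only} zero of $I_{\psi\circ b_1}$ and, for (c), to extract a quantitative lower bound.

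For (a) and (b) I would first show that $\delta_0$ is the unique zero of $F$ on $\mathcal M_\phi(T)$: if $h(\mu)=\chi(\mu)>0$ then Pesin's formula identifies $\mu$ with an absolutely continuous invariant probability, which for $T$ does not exist as the candidate $dx/x$ is infinite; and $\chi(\mu)=0$ forces $h(\mu)=0$ and hence $\mu=\delta_0$. Given $I_{\psi\circ b_1}(\alpha)=0$, pick $\mu_n\in\mathcal M_\phi(T)$ with $F(\mu_n)\to 0$ and $\int\psi\circ b_1\,d\mu_n\to\alpha$; upper semicontinuity of $F$ and tightness then force $\mu_n\to\delta_0$ weakly, upper semicontinuity of $h$ gives $h(\mu_n)\to 0$, and so $\chi(\mu_n)\to 0$. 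Splitting the integral at $[0,1/2)$, on which $\psi\circ b_1\equiv\psi(2)$, the main part converges to $\psi(2)$ and one has to show the tail on $\{b_1\ge 3\}$ is negligible: for (a) this is immediate from the bound $\psi\circ b_1\le 1/2$ and $\mu_n(\{b_1\ge 3\})\to 0$; for (b) I use $\log b_1\le\tfrac12\log T'+O(1)$ on $\{b_1\ge 3\}$ (because the branch $\{b_1=k\}$ has derivative of order $k^{2}$), bounding the tail by $\tfrac12\chi(\mu_n)+o(1)\to 0$. This forces $\alpha=\psi(2)$, and the displayed asymptotics then follow by applying Theorem A to suitable half-line intervals.

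For the first assertion of (c), I exhibit measures with any prescribed mean $\alpha\ge 2$ and $F$ tending to $0$. Let $x_N$ be the fixed point of the inverse branch of $T$ on $\{b_1=N\}$; from $T(x)=1/(1-x)-(N-1)$ one obtains $x_N=1-2/N+O(N^{-2})$ and $\chi(\delta_{x_N})=\log T'(x_N)=2\log N+O(1)$. Setting $t_N=(\alpha-2)/(N-2)$ and $\mu_N=(1-t_N)\delta_0+t_N\delta_{x_N}$, affinity of $h$ and linearity of $\chi$ yield
\begin{equation*}
\int b_1\,d\mu_N=2+(N-2)t_N=\alpha,\qquad F(\mu_N)=-t_N\,\chi(\delta_{x_N})=O\!\left(\frac{\log N}{N}\right)\to 0,
\end{equation*}
so $I_{b_1}(\alpha)=0$ for every $\alpha\in[2,\infty)$.

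For the polynomial lower bound I bypass Theorem A and argue by direct cylinder computation. Given $J\subset[2,\infty)$ bounded with $\sup J=b$, pick an integer $M$ near $(b-2)n+2$ so that on $C(M)=\{b_1=\cdots=b_{n-1}=2,\,b_n=M\}$ the arithmetic mean $2-2/n+M/n$ lies in $J$. Since $T|_{[0,1/2)}$ is the M\"obius map $x\mapsto x/(1-x)$ with inverse $g(y)=y/(y+1)$ and $g^{n-1}(y)=y/((n-1)y+1)$, change of variables gives
\begin{equation*}
\lambda(C(M))=\int_{1-1/(M-1)}^{1-1/M}\frac{dy}{((n-1)y+1)^{2}}\asymp\frac{1}{n^{2}M^{2}}\asymp\frac{1}{n^{4}(\sup J-2)^{2}},
\end{equation*}
whence $\lambda\{(b_1+\cdots+b_n)/n\in J\}\ge n^{-\delta}/(\sup J-2)^{2}$ for every $\delta>4$ (in particular $\delta>5$) and $n\ge N(\delta,J)$. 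The principal obstacle is the tail control for the unbounded observable $\log b_1$ in (b): the closure of the argument relies on the fact that $F(\mu_n)\to 0$ implies \emph{both} $h(\mu_n)\to 0$ and $\chi(\mu_n)\to 0$ separately, which in turn requires the upper semicontinuity of entropy on $\mathcal M_\phi(T)$ furnished by the thermodynamic formalism underlying Theorem A.
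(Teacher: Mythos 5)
Your treatment of part (c) is correct and proceeds by a different, more elementary route than the paper's. The convex combinations $(1-t_N)\delta_0+t_N\delta_{x_N}$ give $I_{b_1}(\alpha)=0$ for every $\alpha\in[2,\infty)$ directly from the variational definition (using $h\le\chi$ so that $F\le0$), whereas the paper deduces this from the sub-exponential lower bound together with Theorem A and lower semi-continuity; and your explicit formula $g^{n-1}(y)=y/((n-1)y+1)$ on the cylinder $\{b_1=\cdots=b_{n-1}=2,\ b_n=M\}$ replaces the paper's distortion lemma and Thaler's asymptotics $c_nn\to1$, and in fact yields the lower bound for every $\delta>4$, slightly stronger than what is asserted. (The small slip $x_N=1-2/N+O(N^{-2})$ is harmless: only $\log T'(x_N)=2\log N+O(1)$ is used.)

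Parts (a) and (b), however, have a genuine gap at the decisive step. You pass from $F(\mu_n)\to0$ to ``upper semicontinuity of $F$ and tightness force $\mu_n\to\delta_0$, upper semicontinuity of $h$ gives $h(\mu_n)\to0$, hence $\chi(\mu_n)\to0$''. Neither tool is available: the R\'enyi map has countably many branches, so minimizing sequences in $\mathcal M_\phi(T)$ need not be tight (mass can escape through the branches accumulating at $x=1$; the paper explicitly notes that tightness fails already in the bounded, harmonic-mean case), and the entropy map is not upper semicontinuous here --- for instance $\mu_k=(1-1/k)\delta_0+(1/k)\nu_k$, with $\nu_k$ a Bernoulli measure on finitely many branches of entropy $\ge k^2$, converges weakly to $\delta_0$ while $h(\mu_k)\to\infty$; in particular such semicontinuity is not ``furnished by the thermodynamic formalism underlying Theorem A''. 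The implication ``$F(\mu_n)\to0\Rightarrow\chi(\mu_n)\to0$'' is exactly the nontrivial point, and the paper obtains it (in contrapositive form) from multifractal analysis: by the conditional variational formula for the Lyapunov spectrum and its strict monotonicity (Iommi, Jaerisch--Takahasi), $\inf_k\chi(\mu_k)>0$ forces $\dim(\mu_k)=h(\mu_k)/\chi(\mu_k)\le\mathcal L\bigl(\inf_k\chi(\mu_k)\bigr)<1$, whence $F(\mu_k)=\chi(\mu_k)(\dim(\mu_k)-1)$ is bounded away from $0$. Your reductions are otherwise the right ones: for the harmonic mean, means tending to $\alpha<1/2$ force $\mu_k([1/2,1))$, hence $\chi(\mu_k)$, bounded below, and for the geometric mean your inequality $\log b_1\le\tfrac12\log T'+O(1)$ on $[1/2,1)$ gives $\chi(\mu_k)\ge C(\alpha-\epsilon)>0$ when the means tend to $\alpha>\log2$; but without the Lyapunov-spectrum input (or a substitute for it) the uniqueness of the minimizer does not close. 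The Pesin-formula observation that $\delta_0$ is the unique zero of $F$ is true but insufficient, since the supremum defining $I_{\psi\circ b_1}$ need not be attained.
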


Given an arithmetic function $\psi$,
let us call
$\alpha\in\mathbb R$ {\it a minimizer} 
if $I_{\psi\circ b_1}(\alpha)=0$ holds.
Theorem A states that the minimizer is unique for the harmonic and geometric means, while
the whole effective domain $[2,\infty)$ coincides with the set of minimizers for the arithmetic mean, 
as a consequence of the sub-exponential lower large deviations bound.

An identification of the set of minimizers is a non-trivial problem.
Since the unit point mass $\delta_0$ at the neutral fixed point satisfies $F(\delta_0)=0$,
 $\psi(2)$ is a minimizer for any $\psi$.
 It is true that $F(\mu)=0$ if and only if $\mu=\delta_0$.
 However, there can exist minimizers other than $\psi(2)$.
Note that $\alpha\in\mathbb R$ is a minimizer
if and only if there is a sequence $\{\mu_k\}_{k=1}^\infty$ in $\mathcal M_\phi(T)$
such that $$\lim_{k\to\infty}F(\mu_k)=0\ \text{ and }\ \lim_{k\to\infty}\int\psi\circ b_1d\mu_k=\alpha.$$
This does not imply $\alpha=\psi(2)$, primarily because
$\mu\in\mathcal M_\phi(T)\mapsto F(\mu)$ is not continuous. 
Even the tightness of $\{\mu_k\}_{k=1}^\infty$ does not follow in the case
$\psi$ is bounded like the harmonic mean case.
To identify the set of minimizers, our
idea is to look at entropy divided by Lyapunov exponent rather than their difference, namely
the dimension of a measure, and use results on the multifractal analysis of the R\'enyi map
\cite{Iom10,JaeTak}.



It is still not well-understood for which dynamical systems and observables
the uniqueness of minimizer breaks down.
If observables are bounded continuous,
the size of the set of minimizers 
 seem to have a correlation with the strength of hyperbolicity of the system.
For uniformly hyperbolic systems, the minimizer is always unique \cite{Kif90,OrePel89,Tak84}.
For non-uniformly hyperbolic systems admitting inducing schemes with exponential tail, 
there exists a rate function locally defined around the mean value
which vanishes only at the mean \cite{MelNic08,ReyYou08}.
The Manneville-Pomeau map with $p<1$ is an archetypal model in which the uniqueness of minimizer breaks down
 \cite{PolSha09,PolShaYur}.
This map has sub-exponential decay of correlations \cite{Gou04,Hu04,Sar02}, and 
 this rate is intimately related to sub-exponential large deviation rates \cite{MelNic08}.
 For unimodal maps satisfying the Collet-Eckmann condition \cite{ColEck83}, the minimizer is unique \cite{ChuTak,KelNow}.
 For those with very weak hyperbolicity constructed by Hofbauer and Keller \cite{HofKel90},
the rate functions vanish on substantially large intervals \cite[Appendix]{ChuRivTak19}. 
Large deviation results for unbounded observables are much more limited
(see e.g., \cite{NicTor19,Tak19}), primarily because
the usual transfer operator methods do not work.

For the R\'enyi map, the neutral fixed point assists in the identification of the sets of minimizers.
In the case of unbounded observables,
a comparison of items (b), (c) in Theorem B suggests that
the speed of blow-up of an observable at $x=1$
is an important factor on the size of the set of minimizers.
Indeed,
in the proof of (c),
we explicitly construct sub-exponentially large sets
using orbits from a neighborhood of $x=0$ to that of $x=1$.
A key point is that the first digit function $ b_1(x)$ blows up faster
than $\log T'(x)$ does as $x\to1$. In this construction,
the constant $\delta$ cannot be replaced by any number smaller than $5=1+2+2$.
These three numbers originate from:
Thaler's asymptotic formula \cite{Tha83} at $x=0$; the quadratic 
order of sizes of elements of the Markov partition near $x=1$;
R\'enyi's condition on distortions.

The above explicit construction for the arithmetic mean can be extended to some other arithmetic functions with
sufficiently fast speeds of blow-up at $x=1$.
Nevertheless, a close inspection into the construction shows that interesting arithmetic functions like
$$\psi(n)=\begin{cases}n&\text{ if $n$ is prime}\\
0&\text{ otherwise}\end{cases}$$
cannot be covered. Then it is natural to ask for which unbounded $\psi$ the corresponding rate function vanishes 
on the whole unbounded interval. The next theorem completely determines the class of such $\psi$.

\begin{theoremc}\label{heavy}
Let $\psi$ be an arithmetic function such that $\displaystyle{\liminf_{n\to\infty}}$$\psi(n)\geq0$ and 
$\displaystyle{\limsup_{n\to\infty}}$$\psi(n)=\infty.$
Then  $I_{\psi\circ b_1}(\alpha)=0$ 
  holds for any $\alpha\in[\psi(2),\infty)$ if and only if
 $$\limsup_{n\to\infty} \frac{\psi(n)}{\log n}=\infty.$$ \end{theoremc}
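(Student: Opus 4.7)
The plan is to prove the two implications separately. Sufficiency is an explicit construction using convex combinations of Dirac masses, while necessity uses the variational principle for the pressure of $-t\log T'$ on the R\'enyi system.

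\textbf{Sufficiency ($\Leftarrow$).} Suppose $\limsup_{n\to\infty}\psi(n)/\log n=\infty$. For $\alpha=\psi(2)$ the Dirac mass $\delta_0$ gives $F(\delta_0)=0$ and $\int\psi\circ b_1\,d\delta_0=\psi(2)$, so $I_{\psi\circ b_1}(\psi(2))=0$. Now fix $\alpha>\psi(2)$ and choose $n_k\to\infty$ with $\psi(n_k)/\log n_k\to\infty$; necessarily $\psi(n_k)\to\infty$. Since every branch of $T$ is full, there is a unique fixed point $x_{n_k}\in[b_1=n_k]$, and an elementary computation gives $1-x_{n_k}=2/(n_k+\sqrt{n_k^2-4})$, hence $\log T'(x_{n_k})=2\log n_k+O(n_k^{-2})$. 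Set
\[\epsilon_k=\frac{\alpha-\psi(2)}{\psi(n_k)-\psi(2)}\quad\text{and}\quad \mu_k=(1-\epsilon_k)\delta_0+\epsilon_k\delta_{x_{n_k}}\in\mathcal M_\phi(T),\]
where $\epsilon_k\in(0,1)$ for $k$ large. By $T$-invariance of $\delta_0$ and $\delta_{x_{n_k}}$ and the affinity of $h$ and $\chi$,
\[F(\mu_k)=-\epsilon_k\log T'(x_{n_k})=-\frac{(\alpha-\psi(2))\bigl(2\log n_k+O(n_k^{-2})\bigr)}{\psi(n_k)-\psi(2)}\longrightarrow 0,\]
while $\int\psi\circ b_1\,d\mu_k=\alpha$ by the choice of $\epsilon_k$. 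Therefore $I_{\psi\circ b_1}(\alpha)=0$.

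\textbf{Necessity ($\Rightarrow$).} Suppose, toward a contradiction, that $I_{\psi\circ b_1}\equiv 0$ on $[\psi(2),\infty)$ while $C:=\limsup_{n\to\infty}\psi(n)/\log n<\infty$. Pick $D\geq 0$ with $\psi(n)\leq C\log n+D$ for every $n\geq 2$. Since $T'\geq(b_1-1)^2$ on each cylinder $[b_1=n]$, for every $\mu\in\mathcal M_\phi(T)$,
\[\chi(\mu)=\int\log T'\,d\mu\geq 2\int\log(b_1-1)\,d\mu\geq 2\int\log b_1\,d\mu-2\log 2,\]
so
\[\int\psi\circ b_1\,d\mu\leq\tfrac{C}{2}\chi(\mu)+C\log 2+D.\]
The thermodynamic formalism for the R\'enyi map developed in \cite{Iom10,JaeTak} provides a pressure function $P(t)=\sup_{\mu\in\mathcal M_\phi(T)}(h(\mu)-t\chi(\mu))$ of the family $-t\log T'$ that is finite for $t>1/2$, equals $0$ on $[1,\infty)$, and is strictly positive on $(1/2,1)$. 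Fix any $t_0\in(1/2,1)$; the variational principle gives
\[F(\mu)=\bigl(h(\mu)-t_0\chi(\mu)\bigr)-(1-t_0)\chi(\mu)\leq P(t_0)-(1-t_0)\chi(\mu).\]
Hence, for every $\alpha$, every $\epsilon>0$ and every $\mu\in\mathcal M_\phi(T)$ with $\int\psi\circ b_1\,d\mu\in(\alpha-\epsilon,\alpha+\epsilon)$,
\[F(\mu)\leq P(t_0)-\frac{2(1-t_0)}{C}\bigl(\alpha-\epsilon-C\log 2-D\bigr).\]
The right-hand side is independent of $\mu$ and tends to $-\infty$ as $\alpha\to\infty$; picking $\alpha$ so that it is $\leq -1$ forces $I_{\psi\circ b_1}(\alpha)\geq 1$, contradicting the assumed vanishing.

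\textbf{Main obstacle.} The decisive technical input is the variational inequality $h(\mu)-t_0\chi(\mu)\leq P(t_0)$ together with the finiteness and positivity of $P(t_0)$ at some $t_0\in(1/2,1)$; both depend on the thermodynamic formalism of \cite{Iom10,JaeTak} and require care because of the neutral fixed point at $0$ and the countably infinite Markov structure near $1$. In the sufficiency part, the identity $\log T'(x_{n_k})\sim 2\log n_k$ reveals that the growth condition $\limsup\psi(n)/\log n=\infty$ is precisely the threshold at which the entropy cost $\epsilon_k\log T'(x_{n_k})$ of raising the observable average to $\alpha$ can be made vanishingly small, so the quantitative comparison between $\psi(n)$ and $\log n$ is not incidental but the exact right one.
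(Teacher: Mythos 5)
Your proposal is correct in substance, but it takes a genuinely different route from the paper in both directions. For the sufficiency, the paper (Lemma \ref{seq}) does not hit each $\alpha$ directly: it builds $\mu_k=(1-1/r_k)\nu_k+(1/r_k)\delta_k$, where the $\nu_k$ are ergodic expanding measures of dimension close to $1$ supplied by the multifractal analysis (Lemma \ref{lalpha}, \cite{JaeTak}), proves $\chi(\mu_k)\to0$ by contradiction with the strict monotonicity of the Lyapunov spectrum \cite{Iom10}, deduces only $\liminf_{\alpha\to\infty}I_{\psi\circ b_1}(\alpha)=0$, and then spreads the vanishing over $[\psi(2),\infty)$ by convexity together with $I_{\psi\circ b_1}(\psi(2))=0$. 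Your two-point construction $(1-\epsilon_k)\delta_0+\epsilon_k\delta_{x_{n_k}}$ is more elementary: it needs neither the multifractal input nor the convexity step, since the average is pinned at exactly $\alpha$ and $F(\mu_k)=-\epsilon_k\log T'(x_{n_k})\to0$; your computations $1-x_{n_k}=2/(n_k+\sqrt{n_k^2-4})$, $\log T'(x_{n_k})=2\log n_k+O(n_k^{-2})$, and $x_{n_k}\in[b_1=n_k]$ all check out, and the conclusion $I_{\psi\circ b_1}(\alpha)=0$ uses (as the paper also does) that $I\geq0$, i.e. $h(\mu)\leq\chi(\mu)$. For the necessity, the paper proves the stronger Lemma \ref{classify2} (under $\limsup_n\psi(n)/\log n<\infty$ the unique minimizer is $\psi(2)$), via $\log T'\geq C\psi\circ b_1$ on $[1/2,1)$ plus the conditional variational formula and strict monotonicity of the Lyapunov spectrum, which force $\dim(\mu)\leq\mathcal L(\chi_\alpha)<1$. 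You instead use finiteness of the geometric pressure at some $t_0\in(1/2,1)$ to get $F(\mu)\leq P(t_0)-(1-t_0)\chi(\mu)$ together with a linear-in-$\alpha$ lower bound on $\chi(\mu)$, giving $I(\alpha)\to\infty$; this is weaker than Lemma \ref{classify2} (it does not locate the minimizer nor show $I(\alpha)>0$ for every $\alpha>\psi(2)$) but entirely sufficient for the contrapositive. Note that only finiteness of $P(t_0)$ with $t_0<1$ is used (positivity is irrelevant), and that this finiteness has a self-contained proof avoiding any delicacy about which pressure the references define: $h(\mu)-t\chi(\mu)\leq H_\mu(\mathscr{A}^1)-t\int\log T'\,d\mu\leq\log\sum_{n\geq2}(n-1)^{-2t}=\log\zeta(2t)<\infty$ for $t>1/2$, since $\inf_{[b_1=n]}\log T'=2\log(n-1)$.

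Two small repairs. First, with $C=\limsup_n\psi(n)/\log n$ you cannot in general pick $D$ with $\psi(n)\leq C\log n+D$ for all $n\geq2$ (take $\psi(n)=C\log n+\sqrt{\log n}$, or $C=0$ with $\psi$ unbounded, which your hypotheses allow); replace $C$ by $C+1$ throughout, which changes nothing downstream. Second, in the sufficiency you should note that $\psi(n_k)\to\infty$ along your subsequence (immediate from $\psi(n_k)/\log n_k\to\infty$), so that $\epsilon_k\in(0,1)$ for large $k$; you assert this but it deserves the one-line justification.
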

 
 To prove Theorem C, we replace the construction of sub-exponentially large sets
 with a multifractal-analysis argument which shows that the rate function vanishes at infinity.

\section{Proofs of the theorems}
In this section we prove the three theorems (Theorem A in Sect.\ref{LDP},
Theorem B in Sect.\ref{identifying}, Theorem C in Sect.\ref{flat-part}).
Sect.\ref{MFA} collects ingredients in the multifractal analysis needed for the proofs of Theorem B and Theorem C.


\subsection{The Large deviation principle}\label{LDP}

\begin{proof}[Proof of Theorem A]
Let $\psi$ be an arithmetic function.
Since the rate function $I_{\psi\circ b_1}$ is convex, it is continuous on $(c_\psi,d_\psi)$.
If $c_\psi$ is finite and $I_{\psi\circ b_1}(c_\psi)<\infty$, then from the lower semi-continuity and convexity
of the rate function,
$\displaystyle{\lim_{\alpha\to c_\psi+0}}$$I_{\psi\circ b_1}(\alpha)=I_{\psi\circ b_1}(c_\psi)$ holds.
The continuity also holds at the other boundary point $d_\psi$ provided it is finite.
Therefore, it suffices to prove the following lower and upper bounds:
\begin{itemize}
\item[] for any open interval $U$ intersecting $(c_\psi,d_\psi)$,
\begin{equation}\label{l}
\liminf_{n\to\infty}\frac{1}{n}\log\lambda\left\{x\in[0,1)\colon\frac{1}{n}
\sum_{j=1}^n\psi(b_j(x))\in U\right\}
\geq-\inf_U I_{\psi\circ b_1}.\end{equation}
\item[] for any closed interval $V$ intersecting $(c_\psi,d_\psi)$,
\begin{equation}\label{u}\limsup_{n\to\infty}\frac{1}{n}\log\lambda\left\{x\in[0,1)\colon\frac{1}{n}
\sum_{j=1}^n\psi(b_j(x))\in V\right\}
\leq-\inf_V I_{\psi\circ b_1}.\end{equation}
\end{itemize}
We match these bounds to deduce the desired equality.

 Although proofs of the two bounds
 proceed much in parallel to that of \cite[Theorem 2]{Tak19} for the RCF expansion,
some additional considerations are necessary due to the existence of the neutral fixed point
(e.g., a sub-exponential distortion).
A proof of the next lemma is completely analogous to that of \cite[Lemma 8]{Tak19} and hence omitted.
\begin{lemma}\label{ratefunction}
For any Borel set $B\subset\mathbb R$,
$$\sup \left\{F(\mu)\colon\mu\in\mathcal M_\phi(T),\ \int\psi\circ b_1d\mu\in B\right\}\leq-\inf_B I_{\psi\circ b_1}.$$
If $B$ is an open set, then the inequality is an equality.
\end{lemma}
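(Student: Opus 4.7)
The plan is to extract both claims directly from the definition of $I_{\psi\circ b_1}$, since the lemma is essentially an unwinding of that definition. Recall that, by construction, for each $\alpha \in \mathbb R$ the quantity $-I_{\psi\circ b_1}(\alpha)$ is the decreasing limit (hence the infimum) of the suprema $S(\alpha,\epsilon) := \sup\{F(\mu) : \mu \in \mathcal M_\phi(T),\ \int \psi\circ b_1\,d\mu \in (\alpha-\epsilon, \alpha+\epsilon)\}$ as $\epsilon \to 0$, with the convention $\sup \emptyset = -\infty$.

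For the upper bound, I would fix an arbitrary $\mu \in \mathcal M_\phi(T)$ with $\alpha := \int \psi\circ b_1\,d\mu \in B$. Since $\alpha \in (\alpha-\epsilon, \alpha+\epsilon)$ for every $\epsilon > 0$, we have $F(\mu) \leq S(\alpha,\epsilon)$, so letting $\epsilon \downarrow 0$ gives $F(\mu) \leq -I_{\psi\circ b_1}(\alpha) \leq -\inf_B I_{\psi\circ b_1}$. Taking the supremum over all admissible $\mu$ proves the inequality for arbitrary Borel $B$.

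For the reverse inequality when $B$ is open, I would argue as follows. If $\inf_B I_{\psi\circ b_1} = +\infty$ there is nothing to prove, so fix $\delta > 0$ and choose $\alpha \in B$ with $I_{\psi\circ b_1}(\alpha) < \inf_B I_{\psi\circ b_1} + \delta$. Openness of $B$ yields some $\epsilon_0 > 0$ with $(\alpha-\epsilon_0,\alpha+\epsilon_0)\subset B$, and the monotonicity of $S(\alpha,\cdot)$ implies
$$\sup\left\{F(\mu)\colon \int \psi\circ b_1\,d\mu \in B\right\} \ \geq\ S(\alpha,\epsilon_0) \ \geq\ \lim_{\epsilon \to 0} S(\alpha,\epsilon) \ =\ -I_{\psi\circ b_1}(\alpha) \ >\ -\inf_B I_{\psi\circ b_1} - \delta.$$
Letting $\delta \downarrow 0$ finishes the argument.

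In truth there is no genuine obstacle: the statement is a formal consequence of the definition of $I_{\psi\circ b_1}$ as a lower-semicontinuous regularization of $-\sup\{F(\mu)\colon\cdot\}$, and this is precisely why the author refers to the analogous \cite[Lemma 8]{Tak19} and omits the details. The only point requiring mild care is that $F$ is allowed to take the value $-\infty$ and that the admissible set can be empty, but both are absorbed by the convention $\sup \emptyset = -\infty$. No ergodic or dynamical input (and in particular no feature of the neutral fixed point of $T$) is used at this stage; those enter only when the lemma is combined with the variational and distortion estimates to establish the large deviation bounds \eqref{l} and \eqref{u}.
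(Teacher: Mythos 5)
Your proof is correct, and it is essentially the argument the paper intends: the lemma is a direct unwinding of the definition of $I_{\psi\circ b_1}$ (with the sup over shrinking intervals, the convention $\sup\emptyset=-\infty$, and openness of $B$ used exactly as you do), which is why the paper omits it and cites the analogous \cite[Lemma 8]{Tak19}. No changes needed.
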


The lower bound \eqref{l} is a consequence of Lemma \ref{ratefunction} and the next lemma.

\begin{lemma}\label{lowlem}
Let $U$ be an open interval intersecting $(c_\psi,d_\psi)$.
For any measure $\mu\in\mathcal M_\phi(T)$ satisfying $\psi\circ b_1\in L^1(\mu)$ and $\int\psi\circ b_1d\mu\in U$,
$$\liminf_{n\to\infty}\frac{1}{n}\log\lambda\left\{x\in[0,1)\colon
\frac{1}{n}\sum_{j=1}^n\psi(b_j(x))\in U\right\}\geq F(\mu).$$
\end{lemma}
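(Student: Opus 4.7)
The plan is to establish this lower bound by the standard ``typical cylinders'' method of thermodynamic formalism, with the extra care required because the R\'enyi map has a neutral fixed point at $0$ and infinitely many branches, and because $\psi$ may be unbounded.

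First I would reduce to the case where $\mu$ is ergodic and $F(\mu)>-\infty$; ergodicity can be arranged via the ergodic decomposition combined with the affinity of $h$, $\chi$ and $\int\psi\circ b_1\,d\mu$, noting that if $\int\psi\circ b_1\,d\mu\in U$ for the average, at least one ergodic component has the same Birkhoff average inside $U$ (by convexity of $U$ and a standard selection argument) and enjoys $F(\mu')\geq F(\mu)$. Then I would approximate $\mu$ by a sequence of ergodic $T$-invariant measures $\mu_N$ supported on a finite Markov sub-shift $\Sigma_N\subset\{x\in[0,1):3\leq b_j(x)\leq N\text{ for all }j\geq1\}$, arranged so that
\begin{equation*}
h(\mu_N)\to h(\mu),\quad \chi(\mu_N)\to\chi(\mu),\quad \int\psi\circ b_1\,d\mu_N\to\int\psi\circ b_1\,d\mu.
\end{equation*}
Such an approximation is standard for countable Markov shifts with the BIP property; one combines finite-alphabet Markov approximations with an ``induced'' truncation that stays away from the neutral fixed point. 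Since $U$ is open and $F$ is affine in $\mu$, it suffices to establish the bound with $\mu$ replaced by each $\mu_N$ and then send $N\to\infty$.

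With $\mu$ now supported on a finite Markov piece $\Sigma_N$, the map $T|_{\Sigma_N}$ is uniformly expanding with bounded R\'enyi-type distortion on all cylinders $[b_1=k_1,\ldots,b_n=k_n]\subset\Sigma_N$. For each large $n$ and each small $\eta>0$, Shannon--McMillan--Breiman together with Birkhoff's ergodic theorem applied to $\phi=-\log T'$ and to $\psi\circ b_1$ provide a set $G_n\subset\Sigma_N$ with $\mu(G_n)\geq 1/2$ whose points $x$ satisfy
\begin{equation*}
\mu\bigl(A_n(x)\bigr)\leq e^{-n(h(\mu)-\eta)},\quad
|A_n(x)|\geq C^{-1}e^{-n(\chi(\mu)+\eta)},\quad
\frac{1}{n}\sum_{j=1}^n\psi(b_j(x))\in U,
\end{equation*}
where $A_n(x)$ is the length-$n$ cylinder containing $x$ and $C$ is the distortion constant on $\Sigma_N$. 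Counting cylinders gives at least $\tfrac12 e^{n(h(\mu)-\eta)}$ disjoint length-$n$ cylinders that meet $G_n$, and each has Lebesgue measure at least $C^{-1}e^{-n(\chi(\mu)+\eta)}$. Summing, the event in the lemma has Lebesgue measure at least a constant times $e^{n(F(\mu)-2\eta)}$, which yields the desired $\liminf$ bound after letting $\eta\to 0$.

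The main obstacle is the first step: producing the finite Markov approximants $\mu_N$ with simultaneous convergence of entropy, Lyapunov exponent, and $\int\psi\circ b_1\,d\mu_N$, given that $-\log T'$ blows up near $x=1$ and $\psi$ may be unbounded. The key point is that one can realize $\mu_N$ as a shift-invariant measure supported on cylinders with $b_j\leq N$ after a small ``$\epsilon$-perturbation'' near $0$ and $1$; upper semi-continuity of entropy, $L^1$ convergence of $\phi$ and $\psi\circ b_1$ (which uses the hypothesis $\psi\circ b_1\in L^1(\mu)$ together with $\phi\in L^1(\mu)$), and the openness of $U$ let the three quantities pass to the limit. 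This is the analogue of \cite[Lemma 8]{Tak19} cited above, but the presence of the neutral fixed point requires that the truncation simultaneously bound the digit sequence away from $\infty$ and away from the symbol forcing proximity to $0$, which is exactly where the R\'enyi-type distortion estimate needs to be invoked.
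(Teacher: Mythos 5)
Your cylinder-counting core (Shannon--McMillan--Breiman plus Birkhoff to select roughly $e^{n(h(\mu)-\eta)}$ good $n$-cylinders on which the Birkhoff average of $\psi\circ b_1$ lies in $U$, each of Lebesgue measure at least $e^{-n(\chi(\mu)+\eta)}$) is exactly the paper's argument, but both of your reduction steps have genuine gaps. First, the ergodic reduction: it is simply false that some ergodic component $\mu'$ of $\mu$ has $\int\psi\circ b_1\,d\mu'\in U$ and $F(\mu')\geq F(\mu)$. Take $\mu=\tfrac12\mu_1+\tfrac12\mu_2$ with $\int\psi\circ b_1\,d\mu_1=0$, $\int\psi\circ b_1\,d\mu_2=10$ and $U=(4,6)$: the average of $\mu$ is in $U$ but neither component's average is, regardless of the values of $F$. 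This is why the paper does not use the ergodic decomposition at all; it invokes the nontrivial approximation result of Iommi--Jordan and Jaerisch--Takahasi, which produces a single \emph{ergodic} measure whose value of $F$ and whose integral of $\psi\circ b_1$ are simultaneously within $\epsilon$ of those of $\mu$. Your step needs to be replaced by a citation of (or a proof of) that statement; convexity of $U$ plus a ``selection argument'' does not give it.

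Second, the finite Markov truncation $\Sigma_N\subset\{x:3\leq b_j(x)\leq N\}$ cannot do what you ask of it. Excluding the digit $2$ forces $\chi(\mu_N)\geq\log 4$ (since $T'\geq 4$ on $[1/2,1)$) and pins $\int\psi\circ b_1\,d\mu_N$ to the values $\psi(n)$, $n\geq3$; so for any $\mu$ charging the digit-$2$ cylinder appreciably --- which is precisely the relevant regime for this paper, where measures concentrate near the neutral fixed point and have small $\chi$ --- neither $\chi(\mu_N)\to\chi(\mu)$ nor $\int\psi\circ b_1\,d\mu_N\to\int\psi\circ b_1\,d\mu$ can hold, and the approximant's average may leave $U$ entirely. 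If instead you keep the digit $2$ in the alphabet, the subsystem contains the neutral fixed point, so your claim of uniform expansion and a bounded distortion constant $C$ on $\Sigma_N$ fails, and the whole second layer of approximation collapses. The paper shows this layer is unnecessary: once $\mu$ is ergodic one works directly with the $n$-cylinders of the full map, using the full-branch property ($T^nA=[0,1)$ for $A\in\mathscr{A}^n$) together with the sub-exponential distortion bound of order $\gamma\log n$ (Lemma \ref{distortion}, resting on Thaler's estimate $c_nn\to1$) to convert the pointwise Birkhoff control of $\frac1n\log(T^n)'$ into the lower bound $\lambda(A)\geq e^{-n(\chi(\mu)+\epsilon)}$ on whole cylinders. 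So the correct proof is both shorter than your proposal and crucially does not pass through a uniformly expanding truncation.
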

\begin{proof}
If $\mu$ is non-ergodic, then for any $\epsilon>0$
there exists an ergodic $\mu_\epsilon\in\mathcal M_\phi(T)$ such that
$|\int\psi\circ b_1d \mu-\int\psi\circ b_1d \mu_\epsilon|<\epsilon$ and $|F(\mu)-F(\mu_\epsilon)|<\epsilon$
\cite{IomJor,JaeTak}.
Hence, it suffices to show the desired inequality assuming $\mu$ is ergodic.

Let $\epsilon>0$ be such that $(\int\psi\circ b_1d\mu-\epsilon,\int\psi\circ b_1d\mu+\epsilon)\subset U$.
For each integer $n\geq1$
let $\mathscr{A}^n$ denote the collection of maximal intervals on which $T^n$ is well-defined
and continuous. 
For each $j=1,\ldots,n$, 
$b_j$ is constant on each element $A\in\mathscr{A}^n$.
This constant value is denoted by $b_j(A)$.
Since  $\mu$ is ergodic and $\mathscr{A}^1$ is a generator,
by Birkhoff's ergodic theorem and Shannon-McMillan-Breiman's theorem,
there exists $n_0\geq 1$ such that for each integer $n\geq n_0$
there exists a finite subset $\mathscr B$ of $\mathscr{A}^n$ such that
$$\left|\frac{1}{n}\log\# \mathscr B-h(\mu)\right|<\epsilon,$$  
$$\left|\frac{1}{n}\sum_{j=1}^n\psi(b_j(A))-\int\psi\circ b_1d\mu\right|<\epsilon\quad\text{and}\quad
\sup_A\left|\frac{1}{n}\log (T^n)'-\chi(\mu)\right|<\epsilon$$
for each $A\in \mathscr B$. Therefore
\begin{align*}
\frac{1}{n}\log\lambda\left\{x\in[0,1)\colon\frac{1}{n}\sum_{j=1}^n\psi(b_j(x))\in U\right\}
&\geq\sum_{A\in\mathscr{B}}\lambda(A)\\
&\geq F(\mu)-2\epsilon.\end{align*}
Increasing $n$ to $\infty$ and decreasing $\epsilon$ to $0$ completes the proof.
\end{proof}

To prove the upper bound \eqref{u},
let $V$ be a closed interval intersecting $(c_\psi,d_\psi)$.
For an integer
  $n\geq1$ define
  $$ \mathscr{A}^n(V)=
\left\{A\in\mathscr{A}^n\colon  \frac{1}{n}\sum_{j=1}^n\psi(b_j(A))\in V\right\}.$$
This is a non-empty set for sufficiently large $n$.
 \begin{lemma}\label{horse}
 For any $\gamma>2$ there exists $N\geq1$ such that if $n\geq N$ then
 for each non-empty finite subset $\mathscr{C}$ of $\mathscr{A}^n(V)$, 
there exists a measure $\mu\in\mathcal M_\phi(T)$ such that 
$$  \log\lambda[\mathscr{C}]\leq F(\mu)n+\gamma\log n \ \ \text{ and }
 \ \   \int\psi\circ b_1d\mu\in V,$$
where $[\mathscr{C}]$ denotes the union of all elements of $\mathscr{C}$.
 \end{lemma}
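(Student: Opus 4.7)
The plan is to construct a finite Markov horseshoe out of $\mathscr C$, produce $\mu$ by the variational principle on it, and convert the resulting pressure identity into the desired inequality via a R\'enyi-type sub-exponential distortion estimate.

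Because $T$ is full-branched Markov, for every $A\in\mathscr A^n$ the restriction $T^n|_A\colon A\to[0,1)$ is a bijective $C^2$ diffeomorphism, and
\[X:=\bigcap_{k\ge 0}T^{-nk}[\mathscr C]\]
carries a $T^n$-action topologically conjugate to the full one-sided shift on $\#\mathscr C$ symbols. Any $T^n$-invariant probability $\nu$ on $X$ will give rise to $\mu:=\tfrac{1}{n}\sum_{j=0}^{n-1}T^j_{\ast}\nu\in\mathcal M(T)$, supported in a finite union of $1$-cylinders; on this support both $\phi=-\log T'$ and $\psi\circ b_1$ are bounded, so automatically $\mu\in\mathcal M_\phi(T)$ with $\psi\circ b_1\in L^1(\mu)$. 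Since $b_1\circ T^j=b_{j+1}$ and $\nu$ is $T^n$-invariant,
\[\int\psi\circ b_1\,d\mu=\sum_{A\in\mathscr C}\nu([A])\cdot\frac{1}{n}\sum_{j=1}^{n}\psi(b_j(A))\in V,\]
as a convex combination of numbers belonging to the closed interval $V$.

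To choose $\nu$ quantitatively, I would first invoke a R\'enyi-type distortion bound: there exist $\gamma>2$ and $N$ such that for $n\ge N$ and $A\in\mathscr A^n$,
\[\sup_{x,y\in A}\bigl|\log(T^n)'(x)-\log(T^n)'(y)\bigr|\le\gamma\log n.\]
Since $T^n(A)=[0,1)$ has unit length, the mean value theorem gives $\lambda(A)\le 1/\inf_A(T^n)'$, whence $\lambda[\mathscr C]\le\sum_{A\in\mathscr C}e^{\psi_A}$ with $\psi_A:=-\inf_A\log(T^n)'$. On the finite full shift $(X,T^n)$ the variational principle for the locally constant potential with values $\psi_A$ is attained by the Bernoulli measure $\nu([A])=e^{\psi_A}/\sum_{A'}e^{\psi_{A'}}$, so $\log\sum_A e^{\psi_A}=h_{T^n}(\nu)+\int\psi\,d\nu$. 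Using the distortion bound once more, $\int\psi\,d\nu\le-\int\log(T^n)'\,d\nu+\gamma\log n$, and for the associated $\mu$ one has $h_{T^n}(\nu)=n\,h(\mu)$ and $\int\log(T^n)'\,d\nu=n\,\chi(\mu)$. Chaining the estimates will yield
\[\log\lambda[\mathscr C]\le h_{T^n}(\nu)-\int\log(T^n)'\,d\nu+\gamma\log n=n\,F(\mu)+\gamma\log n.\]

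The hard part will be the distortion bound. Unlike the Gauss map, whose iterates enjoy $O(1)$ distortion, the R\'enyi map's neutral fixed point at $0$ slows orbits down so that distortion of $T^n$ grows with $n$. Showing that the growth is at most $\gamma\log n$ uniformly on \emph{every} $n$-cylinder (not just on those bounded away from $0$) will require pairing R\'enyi's bound on $|T''|/(T')^2$ on individual branches with a Thaler-type asymptotic controlling backwards contraction near the indifferent fixed point; this is what fixes the quantitative threshold $\gamma>2$ in the statement.
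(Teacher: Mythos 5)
Your reduction is sound and follows the same skeleton as the paper: build the horseshoe $\Lambda=\bigcap_{k\ge0}T^{-nk}[\mathscr C]$, pick a $T^n$-invariant measure via a variational principle, spread it to $\mu=\frac1n\sum_{j=0}^{n-1}T^j_*\nu\in\mathcal M_\phi(T)$, and check $\int\psi\circ b_1\,d\mu\in V$ as a convex combination of the cylinder averages (this last point, and the integrability on the support, you actually spell out more carefully than the paper does). Where you genuinely diverge is the variational step: the paper keeps the non-locally-constant potential $\widehat\phi=-\log(T^n)'$ and modifies Bowen's proof of the variational principle, using the sub-exponential distortion to show the distortion terms wash out in the $m\to\infty$ limit, and then invokes compactness and upper semi-continuity to get an equilibrium state; you instead replace $\widehat\phi$ by the locally constant potential $\psi_A=\sup_A\widehat\phi$, for which the maximizer is the explicit Bernoulli measure, and pay the distortion cost $\gamma\log n$ once when comparing $\psi_A$ back to $\widehat\phi$. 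Your route is more elementary and avoids both the limit over $m$ and the semi-continuity argument, at no loss in the final estimate; the identities $h_{T^n}(\nu)=n\,h(\mu)$ and $\int\log(T^n)'\,d\nu=n\,\chi(\mu)$ you rely on are standard and are exactly what the paper's last line implicitly uses.

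The gap is that the distortion estimate, which is the technical heart of the lemma, is only announced, not proved. The paper's argument is short but has a specific mechanism you do not supply: from $\sup|T''|/|T'|^2\le2$ and the chain rule one gets, for $x,y\in A\in\mathscr A^n$, $\widehat\phi(x)-\widehat\phi(y)\le2\sum_{j=1}^n\lambda(T^jA)$; then the key observation that the longest element of $\mathscr A^{m}$ is the one containing $0$ gives $\lambda(T^jA)\le c_{n-j-1}$ for the Thaler points $c_m$ (defined by $c_0=1/2$, $Tc_m=c_{m-1}$), and $c_m m\to1$ yields $\sum_j\lambda(T^jA)\le\log n+O(1)$, hence distortion at most $2\log n+O(1)$. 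Without this explicit $2\log n$ bound your statement ``there exist $\gamma>2$ and $N$'' is also too weak for the lemma as stated, which demands the conclusion for \emph{every} $\gamma>2$; the constant $2$ coming from $\sup|T''|/|T'|^2$ together with $c_m\sim1/m$ is precisely what makes any $\gamma>2$ admissible for large $n$. You correctly name the two ingredients (the R\'enyi-type branch bound and Thaler's asymptotics), so the plan is the right one, but as written the quantitative estimate that drives the whole lemma is missing.
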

 \begin{proof}
 Let $\gamma>2$.
  Put $\widehat T=T^n$ and 
 $\Lambda=\bigcap_{k=0}^\infty{\widehat T}^{-k}[\mathscr{C}]$.
Then $\widehat T|_\Lambda\colon \Lambda\to \Lambda$ is topologically 
 conjugate to the one-sided full shift on $\#\mathscr{C}$-symbols.
 The function $\widehat\phi=-\log(\widehat T)'$ 
 induces a continuous potential on the shift space. The distortion is sub-exponential,
 and hence inconsequential in the LDP in which exponential scales are concerned.
 \begin{lemma}\label{distortion}
There exists $N\geq1$ such that for every $n\geq 1$ and every $A\in\mathscr{A}^n$,
$$\sup_{x,y\in A}\widehat\phi(x)-\widehat\phi(y)\leq  \gamma\log n.$$
\end{lemma}
\begin{proof}
Define a strictly decreasing sequence $\{c_n\}_{n=0}^{\infty}$ in $[0,1/2]$
inductively by $c_0=1/2$ and $Tc_n=c_{n-1}$ for $n\geq1$.
Since $\sup_{[0,1)} |T''|/|T'|^2\leq 2$,
 considering
the inverse branches of $T$, 
for all $A\in\mathscr{A}^1$ and $x,y\in A$ we have
$$\phi(x)-\phi(y)\leq\sup_{[0,1)} \frac{|T''|}{|T'|^2}|Tx-Ty|\leq2|Tx-Ty|.$$
Iterating this argument, for every $n\geq1$, $A\in\mathscr{A}^n$ and all $x,y\in A$,
\begin{equation*}\widehat\phi(x)-\widehat\phi(y)\leq 2\sum_{j=1}^n\lambda(T^jA)\leq2\sum_{j=1}^{n}\lambda(T^j[0,c_{n-1}])
\leq2\sum_{j=1}^{n-1} c_{n-j-1}+2.\end{equation*}
The second inequality is due to the fact that the longest element of $\mathscr{A}^{n-j}$
is the one containing $0$, which can be checked by induction.
Since $c_nn\to 1$ as $n\to\infty$ 
 (see \cite[Lemma 2, Corollary]{Tha83}),
 the desired estimate holds.
\end{proof}

 Lemma \ref{distortion} gives
 $$\lim_{m\to\infty}\frac{1}{m}\sup_{[c_0,\ldots,c_{m-1}]}\sup_{x,y\in[c_0,\ldots,c_{m-1}]}\sum_{k=0}^{m-1}(\widehat
 \phi(\widehat T^kx)-\widehat
 \phi(\widehat T^ky))=\lim_{m\to\infty}\frac{\gamma\log(mn)}{m}=0,$$ 
where $[c_0,\ldots,c_{m-1}]$ denotes the $m$-cylinder in the shift space
with a symbol sequence $c_0\cdots c_{m-1}$.
This property allows us to slightly modify
 the proof of the variational principle \cite[Lemma 1.20]{Bow75} 
 to show that
 \begin{equation}\label{equation}
 \begin{split}\sup_{\widehat\nu\in\mathcal M(\widehat T|_\Lambda)}\left(h_{\widehat T|_\Lambda}(\widehat\nu)+\int\widehat\phi d\widehat\nu\right)&=
 \lim_{m\to\infty}\frac{1}{m}\log\left(\sum_{x\in (\widehat T|_\Lambda)^{-m}(y_0)}
 \exp{\sum_{k=0}^{m-1}\widehat
 \phi(\widehat T^kx)}\right).\end{split}\end{equation}
   Here, $y_0\in \Lambda$ is fixed, $\mathcal M(\widehat T|_\Lambda)$ denotes 
 the space of $\widehat T|_\Lambda$-invariant Borel probability measures
 endowed with the weak*-topology,
 and $h_{\widehat T|_\Lambda}(\widehat\nu)$ denotes 
 the entropy of $\widehat\nu\in \mathcal M(\widehat T|_\Lambda)$
 with respect to $\widehat T|_\Lambda$.
For the summand inside the logarithm, we have
 \begin{equation}
 \begin{split}\label{sarf}\sum_{x\in (\widehat T|_\Lambda)^{-m}(y_0)}\exp\left(\sum_{k=0}^{m-1}\widehat\phi(\widehat T^kx)\right)
&\geq\left(\inf_{y'\in \Lambda}\sum_{x\in (\widehat T|_\Lambda)^{-1}(y')}e^{\widehat\phi(x)}\right)^m\\
&\geq\left(\sum_{A\in\mathscr{C}}\inf_{A}e^{\widehat\phi}\right)^m\\
&\geq\left(n^{-\gamma}\lambda[\mathscr{C}]\right)^m.
\end{split}\end{equation}
The last inequality is by Lemma \ref{distortion}.
Taking logarithms of both sides, dividing by $m$ increasing $m$ to $\infty$ gives
$$\lim_{m\to\infty}\frac{1}{m}\log\left(\sum_{x\in (\widehat T|_\Lambda)^{-m}(y_0)}
\exp\sum_{k=0}^{m-1}\widehat\phi(\widehat T^kx)\right)
\geq\log\lambda[\mathscr{C}]-\gamma\log n.$$
Plugging this into the previous inequality yields
$$\sup_{\widehat\nu\in\mathcal M(\widehat T|_\Lambda)}\left(h_{\widehat T|_\Lambda}(\widehat\nu)+\int\widehat\phi d\widehat\nu\right)\geq
\log\lambda[\mathscr{C}]-\gamma\log n.$$
Since $\mathcal M(\widehat T|_\Lambda)$ is compact 
and the map $\mathcal M(\widehat T|_\Lambda)\ni\widehat\nu\mapsto h_{\widehat T|_\Lambda}(\widehat\nu)+\int\widehat\phi d\widehat\nu $
is upper semi-continuous, there exists a measure $\widehat\mu$ in $\mathcal M(\widehat T|_\Lambda)$
which attains the above supremum. The spread measure
$\mu = (1/n)\sum_{i=0}^{n-1}\widehat\mu\circ T^{-i}$ is in $\mathcal M_\phi(T)$
and satisfies $\int\psi\circ b_1d\mu\in V$.
 \end{proof}

To finish, note that $\mathscr{A}^n(V)$ can be an infinite set, for instance in the case $\psi$ is bounded.
For each $\kappa\in(1,2)$
choose a finite subset $\mathscr{C}_\kappa$ of $\mathscr{A}^n(V)$ such that
$\lambda[\mathscr{A}^n(V)]\leq \kappa\lambda[\mathscr{C}_\kappa].$
By Lemma \ref{horse}, there exists $\mu\in\mathcal M_\phi(T)$ such that
$\log\lambda[\mathscr{C}_\kappa]\leq F(\mu)n+\gamma\log n$ and
$\int\psi\circ b_1 d\mu\in V.$
We have
\begin{align*}
\log\lambda\left\{x\in[0,1)\colon\frac{1}{n}\sum_{j=1}^n\psi(b_j(x))\in V\right\}&=
\log\lambda[\mathscr{A}^n(V)]\\
&\leq  \log \kappa+\log\lambda[\mathscr{C}_\kappa]\\
&\leq \log\kappa +F(\mu)n+ \gamma\log n\\
&\leq \log\kappa -n\inf_{V} I_{\psi\circ b_1}+\gamma\log n.
\end{align*}
The last equality is by Lemma \ref{ratefunction}.
Decreasing $\kappa$ to $1$, and then
dividing both sides by $n$, increasing $n$ to $\infty$ yields
\eqref{u}. This completes the proof of Theorem A.
\end{proof}


\subsection{Multifractal analysis for the R\'enyi map}\label{MFA}
      For $\alpha\in[0,\infty)$ define 
$$L(\alpha)=\left\{x\in [0,1)\colon \lim_{n\to\infty}\frac{1}{n}\log(T^n)'(x)=\alpha\right\}.$$
Define {\it a Lyapunov spectrum} $\alpha\in[0,\infty)\mapsto \mathcal L(\alpha)$ by
$$\mathcal L(\alpha)=\dim_HL(\alpha),$$
where $\dim_H$ denotes the Hausdorff dimension on $\mathbb R$ with respect to the Euclidean metric.
Iommi \cite[Theorem 4.2]{Iom10} proved that
$L(\alpha)$ is a non-empty set for every $\alpha\in[0,\infty)$, and $\alpha\in (0,\infty)\mapsto \mathcal L(\alpha)$
is strictly monotone decreasing. He gave a formula
  for $\mathcal L(\alpha)$ in terms of the Legendre transform 
of the geometric pressure function. For the proof of Theorem B we need a
conditional variational formula for $\mathcal L(\alpha)$
using the dimensions of expanding measures.

A measure $\mu\in\mathcal M_\phi(T)$ is called {\it expanding} if $\chi(\mu)>0$.
 The unit point mass at the neutral fixed point $x=0$
is the only one measure which is not expanding.
For an expanding measure $\mu$ define its {\it dimension}
$$\dim(\mu)=\frac{h(\mu)}{\chi(\mu)}.$$

\begin{lemma}\label{lalpha}\cite[Lemma 4.2]{JaeTak}
  There exists an ergodic expanding measure with dimension arbitrarily close to $1$.
  Moreover,
 for every $\alpha\in(0,\infty)$,
   \begin{equation*}
   \mathcal L(\alpha)=\max\left\{
  \lim_{\epsilon\to0}\sup\left\{\dim(\mu)\colon \mu\in\mathcal M_\phi(T),\ {\rm expanding,}\ \left|\chi(\mu)-\alpha\right|<\epsilon\right\},\frac{1}{2}\right\}.\end{equation*}
\end{lemma}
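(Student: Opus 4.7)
The plan is to reduce to uniformly hyperbolic Markov subsystems obtained by excising a neighborhood of the neutral fixed point $x=0$, and then to combine finite-approximation of invariant measures with a Hausdorff-dimension covering argument. Concretely, for each integer $N\geq 2$ I would consider $\Lambda_N=\{x\in[0,1)\colon T^jx\in[c_N,1)\text{ for all }j\geq 0\}$, where $\{c_n\}$ is the decreasing sequence from the proof of Lemma~\ref{distortion}; the restriction $T|_{\Lambda_N}$ is conjugate to a finite full shift and is uniformly expanding. Applying the thermodynamic formalism to the geometric potentials $-t\log T'$ on $\Lambda_N$ produces ergodic equilibrium states whose dimension $h(\mu)/\chi(\mu)$ can be driven arbitrarily close to $1$ by taking $N$ large and $t\nearrow 1$. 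Pushing these forward to $T$-invariant measures gives the first assertion.

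For the upper bound in the variational formula, I would cover $L(\alpha)$ efficiently by cylinders $A\in\mathscr{A}^n$ whose diameters are, by the sub-exponential distortion estimate of Lemma~\ref{distortion}, controlled by $(T^n)'(x)^{-1}\asymp e^{-n\alpha}$ for $x\in A\cap L(\alpha)$. Split $L(\alpha)$ into points whose empirical measures along the subsequence making the Lyapunov limit exist have a weak-$*$ accumulation $\mu$ with $\chi(\mu)>0$, and the complementary set. On the first piece a Katok-type argument gives $\dim_H\leq\dim(\mu)$ for some expanding $\mu$ with $\chi(\mu)=\alpha$; on the second piece orbits spend most of their time arbitrarily close to $0$, and Thaler's asymptotic $c_n n\to 1$ forces $n$-cylinders near the fixed point to have length $\asymp 1/n^{2}$, from which a direct Hausdorff covering argument produces the universal cap $1/2$.

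For the lower bound, the first term of the maximum is realized by approximation: given an expanding $\mu$ with $\chi(\mu)$ close to $\alpha$, I would use the Rokhlin/Ledrappier--Young formula together with the standard thickening argument for level sets of Lyapunov exponent to conclude that $\mathcal L(\chi(\mu))\geq \dim(\mu)$, then pass to the $\epsilon$-sup. The floor $1/2$ is obtained constructively: one builds a Cantor-like subset of $L(\alpha)$ whose $n$-th level consists of cylinders near the neutral fixed point of size $\asymp 1/n^{2}$, punctuated by prescribed excursions into fixed branches of $T$ that force the Lyapunov exponent to equal $\alpha$; the Frostman measure on this Cantor set has local dimension $1/2$.

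The main obstacle is controlling the approximation of a general ergodic expanding $\mu$ by measures supported on the uniformly expanding subsystems $\Lambda_N$ while simultaneously preserving entropy, Lyapunov exponent and dimension. Because the inducing return time to $[c_N,1)$ has unbounded tail at the neutral fixed point, the map $\mu\mapsto h(\mu)-t\chi(\mu)$ is only upper semi-continuous on the part of $\mathcal M_\phi(T)$ where $\chi$ is bounded away from $0$; a careful Abramov-formula argument, together with a truncation that redistributes mass from small-$\chi$ measures to measures on $\Lambda_N$, is needed to ensure $\dim(\mu_N)\to\dim(\mu)$ along the approximating sequence. Reconciling this limit with the covering upper bound, while simultaneously keeping track of the alternative contribution $1/2$ in both directions (so that the $\max$ is realized rather than just dominated), is the delicate technical core.
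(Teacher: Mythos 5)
A point of order first: the paper does not prove Lemma~\ref{lalpha} at all --- it is quoted from \cite[Lemma 4.2]{JaeTak} --- so there is no in-paper argument to compare yours against; what follows assesses your sketch against the mechanism that actually drives results of this type (Iommi \cite{Iom10}, Iommi--Jordan \cite{IomJor}, and \cite{JaeTak}).

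Your overall architecture (expanding subsystems and $t\nearrow 1$ equilibrium states for the first claim; a Katok-type bound on the part of $L(\alpha)$ seen by expanding weak-$*$ limits plus a separate treatment of the rest; a Moran/Frostman construction for the lower bound) is the right kind of plan, but your treatment of the constant $1/2$ misidentifies its source, and this breaks both bounds. The $1/2$ is the critical exponent of the geometric potential coming from the branches near $x=1$: the cylinder of digit $b$ has length $\asymp b^{-2}$ while $\log T'\asymp 2\log b$ on it (equivalently $\sum_b b^{-2t}<\infty$ iff $t>1/2$); it is not produced by Thaler's asymptotics at the neutral fixed point. Concretely: (i) the $n$-cylinder containing $0$ is $[0,c_{n-1})$, of length $\asymp 1/n$, not $1/n^2$ (the sets of size $\asymp 1/n^2$ are $[c_n,c_{n-1})$); (ii) on your ``second piece'' the empirical measures converge to $\delta_0$, so every fixed digit occurs with density zero and the exponent $\alpha>0$ can only be generated by sparse, unbounded digits; a covering argument using only the structure near $0$ cannot cap the dimension by $1/2$ (the full set of points whose empirical measures tend to $\delta_0$ has dimension $1$), whereas covering the rare huge-digit cylinders is exactly what costs the exponent $1/2$; (iii) your lower-bound Cantor set, with choices given by sojourn lengths near $0$ and ``prescribed excursions into fixed branches'' to force exponent $\alpha$, cannot work: with bounded digits the expanding excursions must occupy a positive fraction of time, so the contraction they cause is exponential in the elapsed time and swamps the logarithmic entropy of the sojourn choices, giving local dimension $0$ (while making the excursions time-negligible collapses the exponent to $0$). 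The correct construction lets a single digit range over $[B_j,2B_j]$ with $\log B_j$ comparable to the elapsed time, so the same digits supply both the exponent and the entropy, and the ratio $\log B_j/(2\log B_j)$ yields $1/2$. Finally, your lower bound for the variational term is incomplete: from an expanding $\mu$ with $|\chi(\mu)-\alpha|<\epsilon$ you only get $\mathcal L(\chi(\mu))\geq\dim(\mu)$, a bound at the wrong point, and ``passing to the $\epsilon$-sup'' presupposes continuity of $\mathcal L$, which is part of what is being proven; one needs either a concatenation (w-measure) construction producing subsets of $L(\alpha)$ itself from a sequence $\mu_k$ with $\chi(\mu_k)\to\alpha$ and $\dim(\mu_k)$ converging, or to route the argument through the Legendre-transform description of the pressure as in \cite{Iom10,IomJor}. (Minor: your $\Lambda_N$ is a countable, not finite, full shift unless you also truncate the digits.)
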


\begin{remark}In fact, $\mathcal L(\alpha)>1/2$ holds for every $\alpha>0$ but we will not need this.
\end{remark}

The uniqueness of minimizer of the rate functions for the harmonic and geometric means rests on the following lemma.
\begin{lemma}\label{upbound}
Let $\{\mu_k\}_{k=1}^\infty$ be a sequence of expanding measures such that
$\inf_{k\geq1}\chi(\mu_k)>0$. Then $$\sup_{k\geq1} \dim(\mu_k)<1.$$
\end{lemma}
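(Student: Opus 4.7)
The plan is to derive Lemma \ref{upbound} directly from the variational formula in Lemma \ref{lalpha} together with the strict monotonicity of the Lyapunov spectrum $\mathcal L$ on $(0,\infty)$ recalled from Iommi's theorem, using only the trivial upper bound $\mathcal L(\alpha)\leq 1$ coming from the ambient phase space $[0,1)$.

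First, I would establish the pointwise bound
$$\dim(\mu)\leq \mathcal L(\chi(\mu))$$
for every expanding measure $\mu\in\mathcal M_\phi(T)$. This is immediate from the form of Lemma \ref{lalpha}: taking $\alpha=\chi(\mu)>0$, the measure $\mu$ itself lies in the supremum on the right-hand side for every $\epsilon>0$, so $\dim(\mu)$ is dominated by the decreasing limit as $\epsilon\to 0$, and hence by $\mathcal L(\chi(\mu))$.

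Next, set $c:=\inf_{k\geq 1}\chi(\mu_k)$, which is strictly positive by hypothesis, and pick any auxiliary value $\alpha_0\in(0,c)$. Since $\alpha\mapsto\mathcal L(\alpha)$ is strictly monotone decreasing on $(0,\infty)$, we obtain $\mathcal L(c)<\mathcal L(\alpha_0)\leq 1$, the last inequality being the trivial bound on Hausdorff dimension of a subset of $[0,1)$. The same monotonicity gives $\mathcal L(\chi(\mu_k))\leq \mathcal L(c)$ for every $k$, since $\chi(\mu_k)\geq c$. Combining the two,
$$\sup_{k\geq 1}\dim(\mu_k)\ \leq\ \sup_{k\geq 1}\mathcal L(\chi(\mu_k))\ \leq\ \mathcal L(c)\ <\ 1,$$
which is exactly the desired conclusion.

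There is no genuinely hard step here: the entire content of the lemma is already encoded in the combination of strict monotonicity and the variational formula. The only conceptual point worth flagging is that the hypothesis $\inf_k\chi(\mu_k)>0$ is used essentially, because without it the sequence could concentrate on the neutral fixed point, where $\mathcal L(\alpha)\to 1$ as $\alpha\to 0^+$, and the uniform separation from $1$ would be lost.
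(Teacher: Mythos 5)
Your proof is correct and follows essentially the same route as the paper: the bound $\dim(\mu_k)\leq\mathcal L(\chi(\mu_k))$ from the conditional variational formula of Lemma \ref{lalpha}, followed by the strict monotonicity of the Lyapunov spectrum from Iommi's theorem to get $\mathcal L(\chi(\mu_k))\leq\mathcal L\left(\inf_{k\geq1}\chi(\mu_k)\right)<1$. Your extra step of comparing with an auxiliary $\alpha_0\in\left(0,\inf_k\chi(\mu_k)\right)$ and the trivial bound $\mathcal L(\alpha_0)\leq1$ merely makes explicit the same use of strict monotonicity that the paper invokes.
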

\begin{proof}
We have
$$\sup_{k\geq1} \dim(\mu_k)\leq\sup_{k\geq1} \mathcal L(\chi(\mu_k))\leq\mathcal L\left(\inf_{k\geq1}\chi(\mu_k)\right)<1.$$
The first inequality is a consequence of the conditional variational formula in Lemma \ref{lalpha}. The second and third ones
are consequences of the strict monotonicity of the Lyapunov spectrum in \cite[Theorem 4.2]{Iom10}.
\end{proof}

\subsection{Identifying minimizers}\label{identifying}

\begin{proof}[Proof of Theorem B]
We first treat the harmonic and geometric means.
With a couple of lemmas below,
we show the uniqueness of minimizer 
for slightly more general $\psi$ than the harmonic/geometric mean cases.

\begin{lemma}\label{classify1}
Let $\psi$ be an arithmetic function such that
 $c_\psi<\psi(2)$. Let $\alpha\in[c_\psi,\psi(2)]$. Then $I_{\psi\circ b_1}(\alpha)=0$ if and only if $\alpha=\psi(2)$.
\end{lemma}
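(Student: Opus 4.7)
The plan is to prove the two directions separately. The ``if'' direction is immediate: the unit point mass $\delta_0$ at the neutral fixed point satisfies $b_1(0) = \lfloor 1/(1-0)\rfloor + 1 = 2$ and $F(\delta_0) = h(\delta_0) - \chi(\delta_0) = 0$, so $\int \psi\circ b_1 \, d\delta_0 = \psi(2)$ and the variational definition of $I_{\psi\circ b_1}$ yields $I_{\psi\circ b_1}(\psi(2)) = 0$.

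For the ``only if'' direction I would argue by contradiction: assume $I_{\psi\circ b_1}(\alpha) = 0$ for some $\alpha \in [c_\psi, \psi(2))$. Unwinding the definition of the rate function, there exists a sequence $\{\mu_k\} \subset \mathcal{M}_\phi(T)$ with $\psi \circ b_1 \in L^1(\mu_k)$, $F(\mu_k) \to 0$, and $\int \psi \circ b_1\, d\mu_k \to \alpha$. The strategy is to show that such a sequence must converge weakly to $\delta_0$ and that this forces $\int \psi \circ b_1\, d\mu_k \to \psi(2)$, contradicting $\alpha < \psi(2)$.

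The crucial step is a dichotomy on $\chi(\mu_k)$. If $\inf_k \chi(\mu_k) > 0$ along some subsequence, Lemma \ref{upbound} applied to that subsequence yields $\sup_k \dim(\mu_k) \leq r < 1$, hence $h(\mu_k) \leq r\,\chi(\mu_k)$ and $F(\mu_k) \leq -(1-r)\inf_k \chi(\mu_k) < 0$, contradicting $F(\mu_k) \to 0$. So, after passing to a subsequence, $\chi(\mu_k) \to 0$. Since $\log T'(x) = -2\log(1-x)$ is bounded below by $-2\log(1-\delta) > 0$ on $[\delta, 1)$ for every $\delta \in (0,1)$, Markov's inequality forces $\mu_k([\delta, 1)) \to 0$; that is, $\mu_k \to \delta_0$ weakly.

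Writing $\int \psi\circ b_1\, d\mu_k = \psi(2)\mu_k([0, 1/2)) + \int_{[1/2, 1)} \psi \circ b_1\, d\mu_k$ (using $b_1 \equiv 2$ on $[0, 1/2)$), the first term tends to $\psi(2)$. For bounded $\psi$, e.g.\ the harmonic-mean case $\psi(n) = 1/n$, the second term is dominated by $\|\psi\|_\infty \mu_k([1/2, 1)) \to 0$, which closes the argument. The main obstacle is the case of unbounded $\psi$, where mass escaping toward $x = 1$ (on which $\psi\circ b_1$ may blow up) could a priori survive in the limit. To handle this I plan to exploit the quantitative tail estimate $\sum_{j \geq 3} 2\log(j-1)\, \mu_k(\{b_1 = j\}) \leq \chi(\mu_k) \to 0$, which follows from $\log T' \geq 2\log(j-1)$ on the cylinder $\{b_1 = j\} = [(j-2)/(j-1), (j-1)/j)$. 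Combined with $\int \psi\circ b_1\, d\mu_k \to \alpha$, which constrains the tail sums $\sum_j \psi(j)\,\mu_k(\{b_1 = j\})$, a truncation/uniform-integrability argument should force the second integral to vanish, completing the contradiction.
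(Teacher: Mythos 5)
Your proposal follows essentially the same route as the paper: for $\alpha<\psi(2)$ everything is reduced to Lemma \ref{upbound}, and the real content is that the constraint $\int\psi\circ b_1\,d\mu_k\to\alpha<\psi(2)$ keeps $\chi(\mu_k)$ away from $0$. The paper simply asserts this step (``by the assumption on $\psi$, there exists a constant $\chi_\alpha>0$\,\dots''), whereas you arrange the argument as a contradiction and actually supply a proof of it (the dichotomy via Lemma \ref{upbound}, Markov's inequality, the decomposition over $[0,1/2)$ and $[1/2,1)$, and the correct tail bound $\sum_{j\ge3}2\log(j-1)\,\mu_k(\{b_1=j\})\le\chi(\mu_k)$); for the way the lemma is used in the paper ($\psi(n)=1/n$, bounded), your argument is already complete.

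The one point to repair is the target of your final step. Under the contradiction hypothesis the second integral cannot be shown to vanish: since the total tends to $\alpha$ and the first term tends to $\psi(2)$, it must tend to $\alpha-\psi(2)<0$. What you actually need is only $\liminf_k\int_{[1/2,1)}\psi\circ b_1\,d\mu_k\ge 0$, i.e.\ control of the \emph{negative} values of $\psi$ there; positive blow-up near $x=1$ only pushes the integral above $\psi(2)$ and strengthens the contradiction, and indeed no vanishing of the positive part is available when $\limsup_n\psi(n)/\log n=\infty$ (cf.\ Lemma \ref{seq}). For the negative part, note that $c_\psi\le\psi(m)$ for every $m$ (consider the fixed point in $\{b_1=m\}$), so finiteness of $c_\psi$ forces $\inf\psi>-\infty$ and then $\int_{[1/2,1)}\psi\circ b_1\,d\mu_k\ge(\inf\psi)\,\mu_k([1/2,1))\to0$, which closes the proof; your $\log$-tail estimate even covers negative parts of size $O(\log n)$. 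Beyond that the step cannot be closed for a good reason: if the negative values grow faster than $\log n$ along a subsequence $\{n_k\}$, then $\mu_k=(1-w_k)\delta_0+w_k\delta_{x_{n_k}}$, with $x_{n_k}$ the fixed point in $\{b_1=n_k\}$ and $w_k=(\psi(2)-\alpha)/(\psi(2)-\psi(n_k))$, satisfies $F(\mu_k)=-\chi(\mu_k)\to0$ and $\int\psi\circ b_1\,d\mu_k=\alpha$, so $I_{\psi\circ b_1}(\alpha)=0$ for every $\alpha<\psi(2)$ (the mirror image of Lemma \ref{seq}); in that regime $c_\psi=-\infty$, and the paper's unproved assertion conceals exactly this restriction. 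So: state explicitly that you use the lower boundedness of $\psi$ (equivalently, $c_\psi>-\infty$, or at least a negative part of order $O(\log n)$), and replace ``force the second integral to vanish'' by ``force its liminf to be nonnegative''.
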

\begin{proof}
Let $\alpha\in[c_\psi,\psi(2))$. By the assumption on $\psi$,
there exists a constant $\chi_\alpha>0$ such that 
for any sequence $\{\mu_k\}_{k=1}^\infty$ in $\mathcal M_\phi(T)$ such that
$\displaystyle{\lim_{k\to\infty}}$$\int\psi\circ b_1d\mu_k=\alpha$,
$\displaystyle{\liminf_{k\to\infty}}\chi(\mu_k)\geq \chi_\alpha.$
In particular, each $\mu_k$ is an expanding measure.
From Lemma \ref{upbound}, for sufficiently large $k$,
\begin{align*}F(\mu_k)=\chi(\mu_k)\left(\dim(\mu_k)-1\right)
\leq \chi_\alpha\left(
\mathcal L(\chi_\alpha)-1\right)<0.\end{align*}
Hence, $I_{\psi\circ b_1}(\alpha)>0$ holds.
\end{proof}
Lemma \ref{classify1} with $\psi(n)=1/n$ finishes the proof of Theorem B(a).

\begin{lemma}\label{classify2}
Let $\psi$ be an arithmetic function such that $\psi(2)<d_\psi$ and
\begin{equation}\label{reika}\limsup_{n\to\infty}\frac{\psi(n)}{\log n}<\infty.\end{equation}
Let $\alpha\in[\psi(2),\infty)$. Then $I_{\psi\circ b_1}(\alpha)=0$  if and only if $\alpha=\psi(2)$.
\end{lemma}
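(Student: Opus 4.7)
\medskip

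The plan is as follows. The ``if'' direction is handled by the Dirac mass $\delta_0$ at the neutral fixed point: $\delta_0\in\mathcal M_\phi(T)$, $b_1(0)=2$ and $h(\delta_0)=\chi(\delta_0)=0$ give $F(\delta_0)=0$ and $\int\psi\circ b_1\,d\delta_0=\psi(2)$, so $I_{\psi\circ b_1}(\psi(2))=0$.

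For the ``only if'' direction, fix $\alpha\in(\psi(2),\infty)$. The strategy will mirror that of Lemma \ref{classify1}: given any sequence $\{\mu_k\}\subset\mathcal M_\phi(T)$ with $\psi\circ b_1\in L^1(\mu_k)$ and $\int\psi\circ b_1\,d\mu_k\to\alpha$, I would first show that $\liminf_k\chi(\mu_k)>0$, then invoke Lemma \ref{upbound} to conclude that $F(\mu_k)$ is eventually bounded above by a strictly negative constant, which forces $I_{\psi\circ b_1}(\alpha)>0$.

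The new ingredient is an affine inequality
$$\int\psi\circ b_1\,d\mu-\psi(2)\leq K\chi(\mu),\qquad K=K(\psi)<\infty,$$
valid for every $\mu\in\mathcal M_\phi(T)$ with $\psi\circ b_1\in L^1(\mu)$. To extract it, I would rewrite the left-hand side as $\sum_{n\geq 3}(\psi(n)-\psi(2))\mu(\{b_1=n\})$. Since $T'\geq(n-1)^2$ on the cylinder $\{b_1=n\}$, the Lyapunov exponent controls the tail:
$$\chi(\mu)\geq 2\sum_{n\geq 3}\log(n-1)\,\mu(\{b_1=n\}),$$
which gives both $\sum_{n\geq 3}\mu(\{b_1=n\})\leq \chi(\mu)/(2\log 2)$ and, via $\log n\leq\log(n-1)+1$, $\sum_{n\geq 3}\log n\cdot\mu(\{b_1=n\})\leq K_0\chi(\mu)$ for a universal constant $K_0$. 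Choosing $M$ exceeding $\limsup\psi(n)/\log n$ and $N$ so that $\psi(n)\leq M\log n$ for $n\geq N$, and splitting the sum over $n\geq 3$ at $N$, the tail contributes at most a constant multiple of $\chi(\mu)$ through the $\log n$ estimate, while the finite head contributes at most $(\max_{3\leq n<N}\psi(n)-\psi(2))\chi(\mu)/(2\log 2)$. Taking $\mu=\mu_k$ and letting $k\to\infty$ gives $\liminf_k\chi(\mu_k)\geq (\alpha-\psi(2))/K>0$.

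Then Lemma \ref{upbound} yields $D:=\sup_k\dim(\mu_k)<1$, so $F(\mu_k)=\chi(\mu_k)(\dim(\mu_k)-1)\leq (D-1)\chi(\mu_k)$ is eventually bounded above by a negative constant, contradicting $F(\mu_k)\to 0$. The main obstacle is the affine comparison above: it is the quantitative expression of the intuitive fact that a measure with vanishing Lyapunov exponent must concentrate on the cylinder $\{b_1=2\}$ containing the neutral fixed point, where $\psi\circ b_1\equiv\psi(2)$. The hypothesis \eqref{reika} is exactly what makes $\psi$ dominated by $\log T'$ on the tail; its failure is the phenomenon that will be exploited in Theorem C to build sequences with $F(\mu_k)\to 0$ and $\int\psi\circ b_1\,d\mu_k\to\infty$.
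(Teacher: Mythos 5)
Your proposal is correct and takes essentially the same approach as the paper: the hypothesis \eqref{reika} is used to dominate $\psi\circ b_1$ by $\log T'$ away from the cylinder $\{b_1=2\}$ (the paper states this as a pointwise bound $\log T'\geq C\,\psi\circ b_1$ on $[1/2,1)$, you derive it digit-by-digit as an affine inequality $\int\psi\circ b_1\,d\mu-\psi(2)\leq K\chi(\mu)$), which forces $\liminf_k\chi(\mu_k)>0$, and then Lemma \ref{upbound} bounds $F(\mu_k)$ away from $0$ exactly as in Lemma \ref{classify1}. No essential differences or gaps.
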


\begin{proof}
Considering $\psi-\psi(2)$ instead of $\psi$, we may assume $\psi(2)=0$ with no loss of generality.
Let $\alpha\in(0,d_\psi)$. We show that there exists a constant $\chi_\alpha>0$ such that 
for any sequence $\{\mu_k\}_{k=1}^\infty$ in $\mathcal M_\phi(T)$ such that
$\displaystyle{\lim_{k\to\infty}}$$\int\psi\circ b_1d\mu_k=\alpha$,
\begin{equation}\label{rate-lem}
\displaystyle{\liminf_{k\to\infty}}\chi(\mu_k)\geq \chi_\alpha.
\end{equation}
This yields $I_{\psi\circ b_1}(\alpha)>0$ in the same way as the proof of Lemma \ref{classify1}.
 
We have
\begin{equation*}
\begin{split}\int \psi\circ b_1 d\mu_k&=\int_{[0,1/2)}\psi\circ b_1d\mu_k+\int_{[1/2,1)}\psi\circ b_1d\mu_k\\
&= \psi(2)\mu_k([0,1/2))+\int_{[1/2,1)}\psi\circ b_1d\mu_k\\
&= \int_{[1/2,1)}\psi\circ b_1d\mu_k\end{split}\end{equation*}
Let $\epsilon\in(0,\alpha)$.
Then for all sufficiently large $n$,
$$\int_{[1/2,1)}\psi\circ b_1d\mu_k>\alpha-\epsilon.$$
Since $T'(x)=-2\log(1-x)$, \eqref{reika} implies there exists a constant
$C>0$ such that $\log T'\geq C\psi\circ b_1$
on $[1/2,1)$.
Therefore
\begin{align*}
\chi(\mu_k)&=\int_{[0,1/2)}\log T'd\mu_k+\int_{[1/2,1)}\log T' d\mu_k\\
&\geq\int_{[1/2,1)}\log T' d\mu_k\\
&\geq C\int_{[1/2,1)}\psi\circ b_1d\mu_k\\
&>C(\alpha-\epsilon)>0.
\end{align*}
Put $\chi_\alpha=C(\alpha-\epsilon)$.
Then \eqref{rate-lem} holds.
\end{proof}

Lemma \ref{classify2} with $\psi(n)=\log n$ finishes the proof of Theorem B(b).

It is left to treat the arithmetic mean.
Let $J\subset[2,\infty)$ be a bounded interval.
For each integer $n\geq2/\lambda(J)$, fix an integer $z_n$ such that
$$\frac{z_n}{n}\in\left(\inf J,\inf J+\frac{\lambda(J)}{2}\right).$$
Put
$$B_n= T^{-n}\left[1-\frac{1}{z_n-2(n-1)-1},1-\frac{1}{z_n-2(n-1)}\right)\cap \bigcap_{j=0}^{n-1}T^{-j}
\left[0,\frac{1}{2}\right).$$
  If $x\in B_n$ then $$\frac{b_1(x)+\cdots b_n(x)}{n}\in\left(\inf J,\inf J+\frac{\lambda(J)}{2}\right)\subset J.$$
    We use the sequence $\{c_n\}_{n=0}^\infty$ used in the proof of Lemma \ref{distortion}.
  Let $\gamma>2$.
  For sufficiently large $n$, Lemma \ref{distortion} yields
  \begin{align*}
\frac{\lambda(B_n)}{\lambda([0,c_{n-1}])}&\geq  n^{-\gamma}\frac{\lambda(T^nB_n)}{\lambda(T^n[0,c_{n-1}])}\\
&\geq n^{-\gamma}\lambda\left(\left[1-\frac{1}{z_n-2(n-1)-1},1-\frac{1}{z_n-2(n-1)}\right)\right)\\
&\geq\frac{n^{-\gamma}}{(z_n-2(n-1))^2}\\
&\geq\frac{n^{-\gamma-2}}{(\sup J-2)^2}.
\end{align*}
The property $c_nn\to1$ as $n\to\infty$ (see \cite[Lemma 2, Corollary]{Tha83}) implies that
for any $\rho>0$ and sufficiently large $n$,
\begin{equation}
\begin{split}\label{a-mean}\lambda\left\{x\in[0,1)\colon\frac{b_1(x)+\cdots b_n(x)}{n}\in J\right\}
&\geq\lambda(B_n)\geq\frac{n^{-\gamma-\rho-3}}{(\sup J-2)^2}.
\end{split}\end{equation}
Hence, the sub-exponential lower large deviation bound in Theorem B(c) holds.

Now, let $\alpha\in[2,\infty)$.
For any $\epsilon>0$ we have
\begin{align*}
0&=\lim_{n\to\infty}\frac{1}{n}\log\lambda\left\{x\in[0,1)\colon\frac{b_1(x)+\cdots b_n(x)}{n}\in(\alpha-\epsilon,\alpha+\epsilon)\right\}\\
&= -\inf_{(\alpha-\epsilon,\alpha+\epsilon)} I_{ b_1}.\end{align*}
The first equality is from \eqref{a-mean} with $J=(\alpha-\epsilon,\alpha+\epsilon)$, and
the second one is from Theorem A.
The lower semi-continuity of the rate function yields $I_{b_1}(\alpha)=0$.
This completes the proof of Theorem B(c). \end{proof}

\subsection{Rate functions with unbounded flat part}\label{flat-part}

\begin{proof}[Proof of Theorem C]
Let $\psi$ be an arithmetic function such that
$\displaystyle{\liminf_{n\to\infty}}$$\psi(n)\geq0$ and 
$\displaystyle{\limsup_{n\to\infty}}$$\psi(n)=\infty.$
The `only if' part is a consequence of Lemma \ref{classify2}.
We show the `if' part. Assume that
$$\limsup_{n\to\infty}\frac{\psi(n)}{\log n}=\infty.$$ 
\begin{lemma}\label{seq}
There exists a sequence 
$\{\mu_k\}_{k=1}^\infty$ of expanding measures such that 
$$\lim_{k\to\infty}\chi(\mu_k)=0,$$
$$\int \psi\circ b_1d\mu_k<\infty\quad\text{for every $k\geq1$}\ \text{ and }\
\lim_{k\to\infty}\int \psi\circ b_1d\mu_k=\infty.$$ 
\end{lemma}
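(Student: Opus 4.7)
The plan is to take $\mu_k$ to be the equidistributed measure on a carefully chosen periodic orbit of $T$, whose symbolic coding consists of one very large digit $n_k$ followed by a long block of $2$'s. Spending most of its time near the neutral fixed point $x=0$ will make the Lyapunov exponent small, while the single large digit will produce a large contribution to $\int \psi\circ b_1\,d\mu_k$.

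First I would use the hypothesis $\limsup_{n\to\infty}\psi(n)/\log n=\infty$ to extract a sequence of integers $n_k\to\infty$ with $\psi(n_k)/\log n_k\to\infty$; in particular $\psi(n_k)\to\infty$. For each $k$ let $p_k\geq 2$ be an integer to be chosen later, and let $\mu_k$ be the $T$-invariant probability measure equidistributed on the periodic orbit of period $p_k$ with symbolic coding $(n_k,\underbrace{2,\ldots,2}_{p_k-1})$. Since $\mu_k$ is finitely supported, $\psi\circ b_1\in L^1(\mu_k)$ holds trivially, and
$$\int \psi\circ b_1\,d\mu_k=\frac{\psi(n_k)+(p_k-1)\psi(2)}{p_k}.$$
One point of the orbit satisfies $T'\geq(n_k-1)^2>1$, so $\chi(\mu_k)>0$ and $\mu_k$ is expanding.

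The heart of the argument is an upper estimate of $\chi(\mu_k)$. The level-$p_k$ cylinder $A_k=[n_k,2,\ldots,2]\in\mathscr A^{p_k}$ is the image of the cylinder $[2,\ldots,2]=[0,c_{p_k-2})$ (using the sequence $\{c_n\}$ from the proof of Lemma \ref{distortion}) under the inverse branch $\phi_{n_k}\colon y\mapsto(y+n_k-2)/(y+n_k-1)$, whose slope is $\asymp 1/n_k^2$ on $[0,1/2)$. Combining this with the asymptotic $c_n\sim 1/n$ from \cite{Tha83} yields $\lambda(A_k)\asymp 1/(n_k^2 p_k)$. Lemma \ref{distortion} bounds the distortion of $(T^{p_k})'$ on $A_k$ by a factor $p_k^\gamma$, so the mean value theorem gives
$$\chi(\mu_k)=\frac{1}{p_k}\log (T^{p_k})'(y_k)=O\!\left(\frac{\log n_k+\log p_k}{p_k}\right)$$
for any periodic point $y_k\in A_k$.

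Finally, I would set $p_k=\lfloor\sqrt{\psi(n_k)\log n_k}\rfloor$. This yields $p_k/\log n_k\to\infty$, hence $\chi(\mu_k)\to 0$, while $\psi(n_k)/p_k=\sqrt{\psi(n_k)/\log n_k}\to\infty$, hence $\int \psi\circ b_1\,d\mu_k\to\infty$ (the $\psi(2)$ contribution remains a bounded constant). The main obstacle is the cylinder-length estimate $\lambda(A_k)\asymp 1/(n_k^2 p_k)$: it requires matching the polynomial decay near the neutral fixed point (supplied by the Thaler asymptotic) with the explicit geometry of $\phi_{n_k}$, and then using Lemma \ref{distortion} to absorb the non-uniform distortion into a sub-exponential error $p_k^\gamma$ that becomes harmless after dividing by $p_k$.
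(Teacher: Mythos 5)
Your argument is correct, but it proves Lemma \ref{seq} by a genuinely different route than the paper. The paper does not use periodic orbits at all: it takes $\mu_k=(1-1/r_k)\nu_k+(1/r_k)\delta_k$, where $\delta_k$ is the point mass at the fixed point in the branch of digit $n_k$ and $\nu_k$ are ergodic expanding measures with $\dim(\nu_k)\to1$ supplied by the multifractal Lemma \ref{lalpha}; it then deduces $\chi(\mu_k)\to0$ \emph{indirectly}, by showing $\dim(\mu_k)\to1$ and invoking the strict monotonicity of the Lyapunov spectrum from \cite{Iom10} (if $\limsup_k\chi(\mu_k)>0$ one would get $\mathcal L(\alpha)=1$ at some $\alpha>0$, a contradiction). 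You instead take the equidistributed measure on the periodic orbit coded $(n_k,2,\dots,2)$ and estimate $\chi(\mu_k)$ \emph{directly}: the cylinder length $\lambda(A_k)\asymp 1/(n_k^2p_k)$ via the explicit inverse branch and Thaler's asymptotic $c_n\sim1/n$, the mean value theorem, and the sub-exponential distortion bound of Lemma \ref{distortion} give $\chi(\mu_k)=O((\log n_k+\log p_k)/p_k)$, and the choice $p_k=\lfloor\sqrt{\psi(n_k)\log n_k}\rfloor$ balances $\chi(\mu_k)\to0$ against $\int\psi\circ b_1\,d\mu_k\to\infty$. This is closer in spirit to the explicit construction the paper uses for Theorem B(c) than to its proof of Lemma \ref{seq}, and it buys elementarity: no appeal to Lemma \ref{lalpha} or to Iommi's spectrum is needed, and since your measures have zero entropy, $F(\mu_k)=-\chi(\mu_k)\to0$ is immediate, which is all the application to Theorem C requires (the paper's choice, by contrast, produces measures of dimension near $1$, which is what its indirect spectrum argument needs). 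Two small points you should make explicit: the existence of the periodic point with the prescribed coding (standard for the full-branched Markov map $T$, since the composed inverse branch $\phi_{n_k}\circ\phi_2^{p_k-1}$ is a strict contraction of the closed unit interval), and the fact that Lemma \ref{distortion} applies to every cylinder in $\mathscr A^{p_k}$ for $p_k$ large with a fixed $\gamma>2$, so the factor $p_k^\gamma$ is indeed harmless after dividing by $p_k$; also note $\psi(n_k)>0$ for large $k$ (from $\psi(n_k)/\log n_k\to\infty$), so $p_k$ is well defined and tends to infinity. With these routine checks your proof is complete and valid.
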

\noindent Since $h(\mu_k)\leq\chi(\mu_k)$ it follows that $\displaystyle{\lim_{k\to\infty}}$$F(\mu_k)=0$, and therefore $\displaystyle{\liminf_{\alpha\to\infty}}I_{\psi\circ b_1}(\alpha)=0$.
Since $I_{\psi\circ b_1}(\psi(2))=0$ and the rate function is convex, $I_{\psi\circ b_1}(\alpha)=0$
holds for all $\alpha\in[\psi(2),\infty)$.

We prove Lemma \ref{seq} modifying the proof of \cite[Lemma 4.4]{JaeTak}.
Take an increasing sequence $\{n_k\}_{k=1}^\infty$ of positive integers $\geq2$ 
and an increasing sequence
 $\{r_k\}_{k=1}^\infty$ of positive real numbers $\geq1$ such that 
 $\displaystyle{\lim_{k\to\infty}} r_{k}/\log n_k=\infty$ and
 $\displaystyle{\lim_{k\to\infty}}\psi(n_k)/r_{k}=\infty$
 (For example, given $n_{k-1}\geq2$ define $n_k$ to be the smallest $n\geq n_{k-1}$ such that $\psi(n)/\log n\geq k$,
 and put $r_{k}=\sqrt{k}\log n_k$).
By Lemma \ref{lalpha}, there exists an ergodic expanding measure with dimension arbitrarily close to $1$.
As in the proof of Lemma \ref{lowlem}, using
Birkhoff's ergodic theorem and Shannon-McMillan-Breiman's theorem one can approximate each of these measures
with a finite number of cylinders, and therefore
it is possible to take a sequence $\{\nu_k\}_{k=1}^\infty$ of expanding measures
  such that $\chi(\nu_k)\geq 1/\sqrt{r_k/\log n_k}$
and $\int \psi \circ b_1d\nu_k<\infty$ hold for every $k\geq1$
and $\displaystyle{\lim_{k\to\infty}}\dim(\nu_k)=1$.
Let $\delta_k$ denote the unit point mass at the fixed point of $T$ in $\left[1-1/(n_k-1),1-1/n_k\right)$,
and put
$$\mu_k=\left(1-\frac{1}{r_k}\right)\nu_k+\frac{1}{r_k}\delta_{k}.$$
Then $h(\delta_k)=0$,
$\chi(\delta_k)\leq 2\log n_k$ and $\int \psi\circ b_1d\mu_k<\infty$. Since $\psi(n_k)\geq0$,
$$\int \psi\circ b_1d\mu_k\geq \frac{1}{r_k}\int \psi\circ b_1d\delta_k=\frac{\psi(n_k)}{r_k},$$
which goes to $\infty$ as $k$ increases.
For any $c>1$ there exists $k_0\geq1$ such that for every $k\geq k_0$,
$\chi(\mu_k)\leq c\chi(\nu_k).$ Hence
$$\dim(\mu_k)=\frac{h(\mu_k)}{\chi(\mu_k)}\geq\frac{\left(1-1/r_k\right)h(\nu_k)}{c\chi(\nu_k)},$$
which implies
$\displaystyle{\liminf_{k\to\infty}}\dim(\mu_k)\geq 1/c$. Decreasing $c$ to $1$ yields
$\displaystyle{\lim_{k\to\infty}}\dim(\mu_k)=1$.

If $\displaystyle{\limsup_{k\to\infty}}\chi(\mu_k)>0$, then from Lemma \ref{lalpha} it follows that
$$\mathcal L\left(\displaystyle{\limsup_{k\to\infty}}\chi(\mu_k)\right)=1,$$
contradicting the strictly monotone decreasing property of the Lyapunov spectrum \cite[Theorem 4.2]{Iom10}.
This completes the proof of Lemma \ref{seq} and hence that of Theorem C.
\end{proof}


\subsection*{Acknowledgments} 
I thank Ryoki Fukushima, Hiroaki Ito, Johannes Jaerisch for fruitful discussions.
This research was partially supported by the JSPS KAKENHI 
16KT0021, 19K21835. 
Part of this paper was written during the conference
``Thermodynamic Formalism: Dynamical Systems, Statistical Properties and their Applications`` at CIRM Marseille December 2019.
I thank Mark Pollicott and Sandro Vaienti for their hospitality during the conference.


\begin{thebibliography}{10}
  
  \bibitem{Aar86} Aaronson, J.: Random $f$-expansions. Ann. Prob. {\bf 14} (1986) 1037--1057.
  
 \bibitem{AarNak03} Aaronson, J., Nakada, H.: Trimmed sums for non-negative, miximg stationary processes.
  Stochastic processes and their applications {\bf 104} (2003) 173--192.
  
    \bibitem{AdlFla84} Adler, R., Flatto, L.: The backward continued fraction map and geodesic flow.
   Ergodic Theory and Dynamical Systems {\bf 4} (1984) 487--492.
   
   \bibitem{Bow75}
 Bowen, R.:
 {Equilibrium states and the ergodic theory of Anosov
  diffeomorphisms,} Second revised edition.
  Lecture Notes in Mathematics, {\bf 470},
   Springer-Verlag, Berlin 2008.
   
 
\bibitem{ChuRivTak19} Chung, Y. M., Rivera-Letelier, J., Takahasi. H.:
Large deviation principle in one-dimensional dynamics. Invent. Math. {\bf 218} (2019) 853--888.

  \bibitem{ChuTak} Chung, Y. M., Takahasi, H.: Large deviation principle
 for $S$-unimodal maps with flat critical points. arXiv:1708.03695
 
   \bibitem{ColEck83}
Collet, P., Eckmann, J.-P.:
 Positive {L}iapunov exponents and absolute continuity for maps of the
  interval. {\it Ergod. Th. $\&$ Dynam. Sys.}  {\bf 3} (1983) 13--46.


\bibitem{DajKra00} Dajani, K., Kraaikamp, C.: `The mother of all continued fractions'. Colloq. Math. {\bf 84/85} (2000) 109-123.


\bibitem{DenKab07} Denker, M., Kabluchko, Z.: An Erd\"os-R\'enyi law for mixing processes.
Probab. Math. Statist. {\bf 27} (2007) 139--149. 

\bibitem{DonVar75}Donsker, M. D., Varadhan, S. R. S.: Asymptotic evaluation of certain Markov
process expectations for large time, I.
Commun. Pure. Appl. Math. {\bf 28} (1975) 1--47.

\bibitem{Gou04} Gou\"ezel, S.: Sharp polynomial estimates for the decay of correlations. Israel J. Math. {\bf 139} (2004) 29--65.

  



\bibitem{HofKel90} Hofbauer, F., Keller, G.: Quadratic maps without asymptotic measure. 
Commun. Math. Phys. {\bf 127} (1990) 319--337.


\bibitem{Hu04} Hu, H.: Decay of correlations for piecewise smooth maps with indifferent fixed points. Ergodic Theory and Dynamical Systems
{\bf 24} (2004) 495--524.

\bibitem{Iom10} Iommi, G.: Multifractal analysis of the Lyapunov exponent for the backward continued fraction map.
Ergodic Theory and Dynamical Systems {\bf 30} (2010) 211-232.

\bibitem{IomJor} Iommi, G., Jordan, T.: Multifractal analysis of Birkhoff averages for
countable Markov maps. Ergodic Theory and Dynamical Systems.
{\bf 35}  (2015)
 2559--2586.

\bibitem{IosKra02} Iosifescu, M., Kraaikamp, C.: {\it Metrical theory of continued fractions.}
Mathematics and its Applications, {\bf 547.} Kluwer Academic Publishers, Dordrecht, 2002 



\bibitem{JaeTak} Jaerisch, J., Takahasi, H.: Mixed multifractal spectra
of Birkhoff averages for non-uniformly expanding one-dimensional Markov maps
with countably many branches. preprint

\bibitem{KelNow}
Keller, G., Nowicki, T.:
Spectral theory, zeta functions and the distribution of periodic
  points for {C}ollet-{E}ckmann maps.
  Commun. Math. Phys. {\bf 149} (1992) 31--69.


\bibitem{Khi} Khinchin, A. Y.: Metrische Kettenbruchprobleme. Composito Math. {\bf 1} (1935) 361--382.

\bibitem{Khi64} Khinchin, A. Y.: {\it Continued Fractions.} University of Chicago Press, Chicago (1964)

\bibitem{Kif90}
Kifer, Y.:
Large deviations in dynamical systems and stochastic processes.
  Trans. Amer. Math. Soc. {\bf 321} (1990) 505--524.


\bibitem{MelNic08}
Melbourne, I., Nicol, M.:
Large deviations for nonuniformly hyperbolic systems.
 Trans. Amer. Math. Soc. {\bf 360}  (2008) 6661--6676.
 
 \bibitem{NicTor19} Nicol, M., T\"or\"ok, A.: A note on large deviations for unbounded observables. arXiv: 1904.02227
 
 \bibitem{OrePel89}
Orey, S., Pelikan, S.: Deviations of trajectory averages and the defect in {P}esin's formula
  for {A}nosov diffeomorphisms.
Trans. Amer. Math. Soc. {\bf  315} (1989) 741--753.

 
 \bibitem{Pin01} Pinner, C.: More on inhomogeneous Diophantine approximation. J. Th\'eor. Nombres Bordeaux {\bf 13} (2001) 539--557.

\bibitem{PolSha09} Pollicott, M., Sharp, R.:
Large deviations for intermittent maps.
 Nonlinearity {\bf 22} (2009) 2079--2092.

\bibitem{PolShaYur} Pollicott, M., Sharp, R., Yuri, M.: 
Large deviations for maps with indifferent fixed points. Nonlinearity {\bf 11} (1998) 1173--1184.


\bibitem{Ren57} R\'enyi, A.: On algorithms for the generation of real numbers.
Magyar Tud. Akad. Mat. Fiz. Oszt. K\"ozl. {\bf 7} (1957) 265--293.

\bibitem{ReyYou08}
 Rey-Bellet, L., Young, L.-S.:
Large deviations in non-uniformly hyperbolic dynamical systems.
Ergod. Th. $\&$ Dynam. Sys.  {\bf 28} (2008) 587--612.

\bibitem{Sar02} Sarig, O.: Subexponential decay of correlations. Invent. Math. {\bf 150} (2002) 629--653.


\bibitem{Tak84}
Takahashi, Y.:
Entropy functional (free energy) for dynamical systems and their
  random perturbations.
 In {\em Stochastic analysis ({K}atata/{K}yoto, 1982)}, North-Holland Math. Library, {\bf 32}, 437--467. North-Holland, Amsterdam, 1984. 



\bibitem{Tak19} Takahasi, H.: Large deviation principle for arithmetic functions in continued fraction expansion. Monatshefte f\"ur Mathematik {\bf 190}
(2019) 137--152.
   \bibitem{Tha83} Thaler, M.: Transformations on $[0,1]$ with infinite invariant measures,
   Israel J. Math. {\bf 46}  (1983) 67--96.


\end{thebibliography}
\end{document}